\newif\iffloattoend
\newcommand{\RR}{\mathbb R}%Reals
\newcommand{\ZZ}{\mathbb Z}%Integers
\newcommand{\CC}{\mathbb C}%Complex
\newcommand{\FF}{\mathbb F}%Finite
\newcommand{\PP}{\mathbb{P}}
\newcommand{\GL}{\mathrm{GL}}
\newcommand{\PGL}{\mathrm{PGL}}
\newcommand{\sets}[1]{[\![#1]\!]}
\newcommand{\cT}{\mathcal{T}}
\newcommand{\sT}{\mathscr{T}}
\newcommand{\bino}[2]{\binom{#1}{#2}}
\theoremstyle{plain}
\newtheorem{thm}[subsection]{Theorem}
\newtheorem{lem}[subsection]{Lemma}
\newtheorem{prop}[subsection]{Proposition}
\newtheorem{cor}[subsection]{Corollary}
\theoremstyle{definition}
\newtheorem{defn}[subsection]{Definition}
\newtheorem{remark}[subsection]{Remark}
\newtheorem{ex}[subsection]{Example}
\DeclareRobustCommand{\stirling}{\genfrac\{\}{0pt}{}}
\DeclareMathOperator{\des}{\mathrm{des}}
\def\@secnumfont{\bfseries}
\begin{document}

\title[Tiered trees, weights, and $q$-Eulerian numbers]{Tiered trees, weights, and $q$-Eulerian numbers}

\author{William Dugan}
\address{Department of Mathematics and Statistics\\University of
Massachusetts\\Amherst, MA 01003-9305}
\email{wdugan@umass.edu}

\author{Sam Glennon}
\address{Department of Mathematics and Statistics\\University of
Massachusetts\\Amherst, MA 01003-9305}
\email{glennon@math.umass.edu}

\author{Paul E. Gunnells}
\address{Department of Mathematics and Statistics\\University of
Massachusetts\\Amherst, MA 01003-9305}
\email{gunnells@math.umass.edu}

\author{Einar Steingr\'\i msson}
\address{Department of Computer and Information Sciences\\
University of Strathclyde\\Livingstone Tower\\
26 Richmond Street\\
Glasgow, G1 1XH, UK}
\email{einar@alum.mit.edu}

%\date{\today} 
\date{8 Feb 2017}
\thanks{This work was partially supported by NSF grant DMS 1501832.
We thank the referees for helpful comments}

\keywords{Maxmin trees, intransitive trees, Eulerian numbers,
$q$-Eulerian numbers, nonambiguous trees, permutations}

\subjclass[2010]{Primary 05C05; Secondary 05A05, 05C31}

\begin{abstract}
\emph{Maxmin trees} are labeled trees with the property that each
vertex is either a local maximum or a local minimum.  Such trees were
originally introduced by Postnikov \cite{postnikov}, who gave a
formula to count them and different combinatorial interpretations for
their number.  In this paper we generalize this construction and
define \emph{tiered trees} by allowing more than two classes of
vertices.  Tiered trees arise naturally when counting the absolutely
indecomposable representations of certain quivers, and also when one
enumerates torus orbits on certain homogeneous varieties.  We define a
notion of weight for tiered trees and prove bijections between various
weight 0 tiered trees and other combinatorial objects; in particular
order $n$ weight 0 maxmin trees are naturally in bijection with
permutations on $n-1$ letters.  We conclude by using our weight
function to define a new $q$-analogue of the Eulerian numbers.
\end{abstract}

\maketitle
% \let\thefootnote\relax
% \footnotetext{Base revision~\GITAbrHash, \GITAuthorDate,
% \GITAuthorName.}

\section{Introduction}\label{s:intro}

\subsection{} Let $T$ be a tree with vertices labeled by the ordered
set $\{1,\dotsc ,n \}$.  We say $T$ is a \emph{maxmin} tree if for any
vertex $v$, the labels of its neighbors are either all less than or
all greater than that of $v$.  Such trees were introduced by Postnikov
\cite{postnikov}; he called them \emph{intransitive trees}, since they
satisfy the property that for any triple $1\leq i<j<k \leq n$, the
pairs $\{i,j \}$ and $\{j,k \}$ cannot both be edges of $T$.  These
trees first appeared in the study of hypergeometric systems attached
to root systems \cite{ggp}, and were later connected to a variety of
combinatorial objects:
\begin{itemize}
\item regions of the Linial hyperplane arrangement (the affine
arrangement in $\RR^{n}$ defined by the equations $x_{i}-x_{j} = 1$,
$1\leq i<j\leq n$);
\item local binary search trees (labeled plane binary trees with the
property that every left child has a smaller label than its parent and
every right child has a larger label than its parent);
\item and semiacyclic tournaments (directed graphs on the set
$\{1,\dotsc ,n \}$ such that in every directed cycle,
there are more edges $(i,j)$ with $i<j$ than with $i > j$).
\end{itemize}
For more details, we refer to \cite{postnikov, stanley-pnas, cfv,
forge, ps}.

\subsection{} In this paper, we consider a generalization of
Postnikov's trees called \emph{tiered trees}.  Instead of two classes
of vertices, maxima and minima, we allow more classes.  More
precisely, a tiered tree with $m\geq 2$ tiers is a tree on the vertex set
$V=\{1,\dotsc ,n \}$ with a map $t$ from $V$ to an ordered set with
$m$ elements.  We require that if $v$ is a vertex adjacent to $v'$
with $v>v'$, then $t (v)>t (v')$.  

Tiered trees naturally arise in two a priori unrelated geometric
counting problems \cite{supernova}:
\begin{itemize}
\item counting absolutely irreducible representations of the supernova
quivers (quivers arising in the study of the moduli
spaces of certain irregular meromorphic connections on trivial
bundles over $\PP^{1}$ \cite{boalch}), and
\item counting certain torus orbits on partial flag varieties of type
$A$ over finite fields, namely those orbits with trivial stabilizers.
\end{itemize}
These contexts also motivate defining a weight function on tiered
trees with values in $\ZZ_{\geq 0}$.  We define weights of
tiered trees and show that the weights are related to the Tutte
polynomials of certain graphs (Theorem \ref{thm:wtisactivity}).  We
show that weight $0$ trees of the two extreme types of tiering
functions---maxmin trees, which have two tiers, and fully tiered
trees, in which every vertex lies on a different tier---are related to
other combinatorial objects, namely permutations sorted by their
descents and the complete nonambiguous trees of
Aval--Boussicault--Bouvel--Silimbani \cite{nat}.  Finally we use the
weights of maxmin trees to define a weight for permutations, which
leads to a version of $q$-Eulerian numbers different from those
studied by Carlitz, Stanley, and Shareshian--Wachs.

\subsection{} Here is a guide to the sections of this paper.  In \S
\ref{s:tt} we define tiered trees and their weights.  We also discuss
the connection between the weight and geometry in the case of counting
torus orbits.  In \S  \ref{s:ec} we discuss the enumeration of tiered
trees.  The next two sections give combinatorial interpretations of
the two extreme cases of weight zero trees: \S \ref{s:en} treats
weight zero maxmin trees, and \S \ref{s:nat} treats weight zero fully
tiered trees.  Finally in \S \ref{s:wp} we use weighted maxmin trees
to define a notion of weight for permutations and our $q$-Eulerian
polynomials.  We conclude with a discussion of some open questions.

\section{Tiered trees and weights}\label{s:tt}

\subsection{} For any $n$ let $\sets{n}$ be the finite set $\{1,\dotsc
,n \}$.  Let $G$ be a graph with vertices $V$ labeled by $\sets{n}$
and let $m\geq 2$ be an integer.  We say $G$ is \emph{tiered with $m$
levels} if there is a surjective function $t\colon V \rightarrow
\sets{m}$ such that $t (v)\not = t (v')$ for any adjacent vertices
$v,v'$, and such that if $v,v'$ are adjacent and $v>v'$, then $t (v) >
t (v')$.  We call $t$ a \emph{tiering function} and say $G$ is
\emph{tiered by $t$}.  Given $i\in \sets{m}$, we say that the vertices
$v$ with $t (v) = i$ are at \emph{tier $i$}.
Any tiered graph determines a composition $p = (p_{1}, \dotsc ,p_{m})$
of $|V|$ into $m$ parts by putting $p_{k} = |t^{-1} (k)|$.  We call
$p$ the \emph{tier type}.

\begin{defn}\label{def:tt}
A \emph{tiered tree} $T$ with tiering function $t$ is a labeled tree
tiered by $t$.
\end{defn}

Thus a tiered tree with $m$ levels can be visualized as a tree with
its vertices placed at $m$ different heights.  Vertices at the same
height can never be adjacent, and if two vertices at different heights
are adjacent, we require the label of the higher vertex to be larger
than that of the lower vertex.  Note that this does not mean that the
maximal label must occupy the top tier, nor does it mean that the
minimal label must occupy the bottom tier.  Figure
\ref{fig:exampletree} shows a tree tiered with four levels and with
tier type $(6,4,2,3)$.

We say a tree is \emph{fully tiered} if $m=|V|$, and is a \emph{maxmin
tree} if $m=2$.  For maxmin trees, it is natural to call the vertices
in the top tier \emph{maxima} and those in the bottom tier
\emph{minima}.  Figure \ref{fig:4verts} shows all the maxmin trees on
$4$ vertices, grouped according to the number of maxima they have.
Similarly, Figure \ref{fig:ft3} shows the fully tiered trees on $3$
vertices. 

\begin{figure}[htb]
\psfrag{1}{$\scriptstyle 1$}
\psfrag{2}{$\scriptstyle 2$}
\psfrag{3}{$\scriptstyle 3$}
\psfrag{4}{$\scriptstyle 4$}
\psfrag{5}{$\scriptstyle 5$}
\psfrag{6}{$\scriptstyle 6$}
\psfrag{7}{$\scriptstyle 7$}
\psfrag{8}{$\scriptstyle 8$}
\psfrag{9}{$\scriptstyle 9$}
\psfrag{a}{$\scriptstyle A$}
\psfrag{b}{$\scriptstyle B$}
\psfrag{c}{$\scriptstyle C$}
\psfrag{d}{$\scriptstyle D$}
\psfrag{e}{$\scriptstyle E$}
\psfrag{f}{$\scriptstyle F$}
\begin{center}
\includegraphics[scale=0.35]{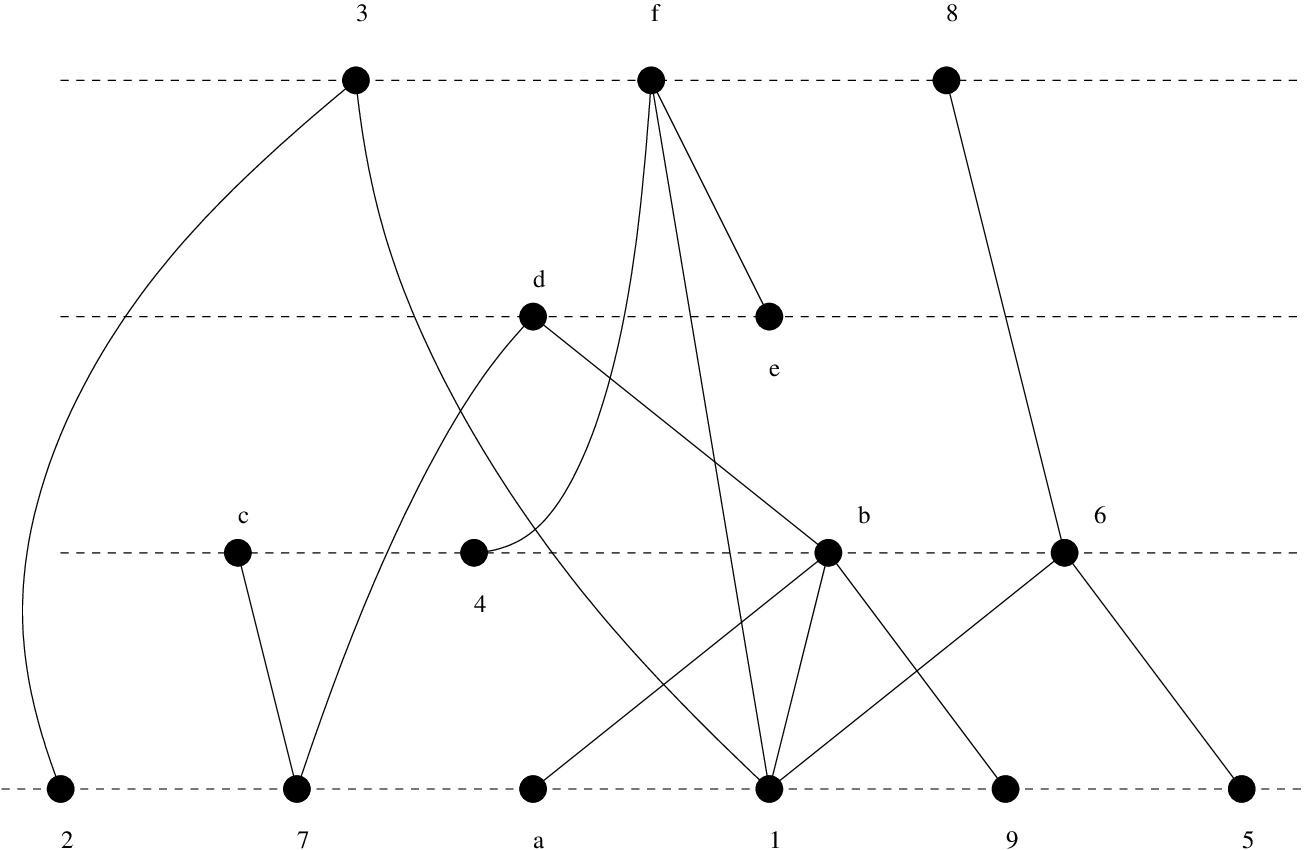}
\end{center}
\caption{A tiered tree with 4 levels and 15 vertices.  The tier type
is $(6,4,2,3)$.\label{fig:exampletree}}
\end{figure}

\begin{figure}[htb]
\psfrag{1}{$\scriptstyle 1$}
\psfrag{2}{$\scriptstyle  2$}
\psfrag{3}{$\scriptstyle  3$}
\psfrag{4}{$\scriptstyle 4$}
\begin{center}
\includegraphics[scale=0.3]{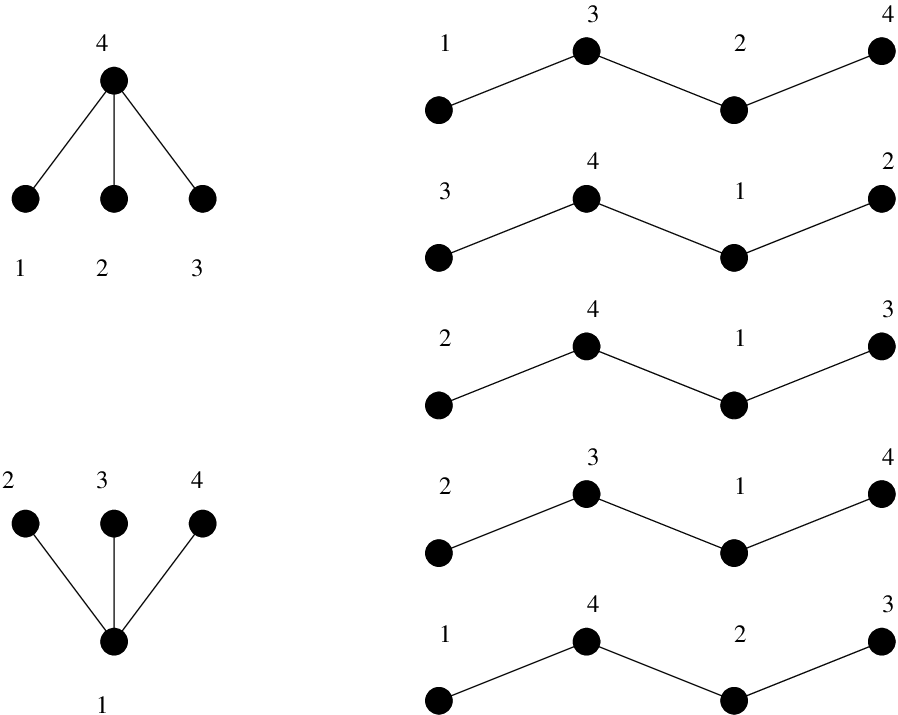}
\end{center}
\caption{All maxmin trees on $4$ vertices.\label{fig:4verts}}
\end{figure}

\begin{figure}[htb]
\psfrag{1}{$\scriptstyle 1$}
\psfrag{2}{$\scriptstyle  2$}
\psfrag{3}{$\scriptstyle  3$}
\psfrag{4}{$\scriptstyle 4$}
\begin{center}
\includegraphics[scale=0.3]{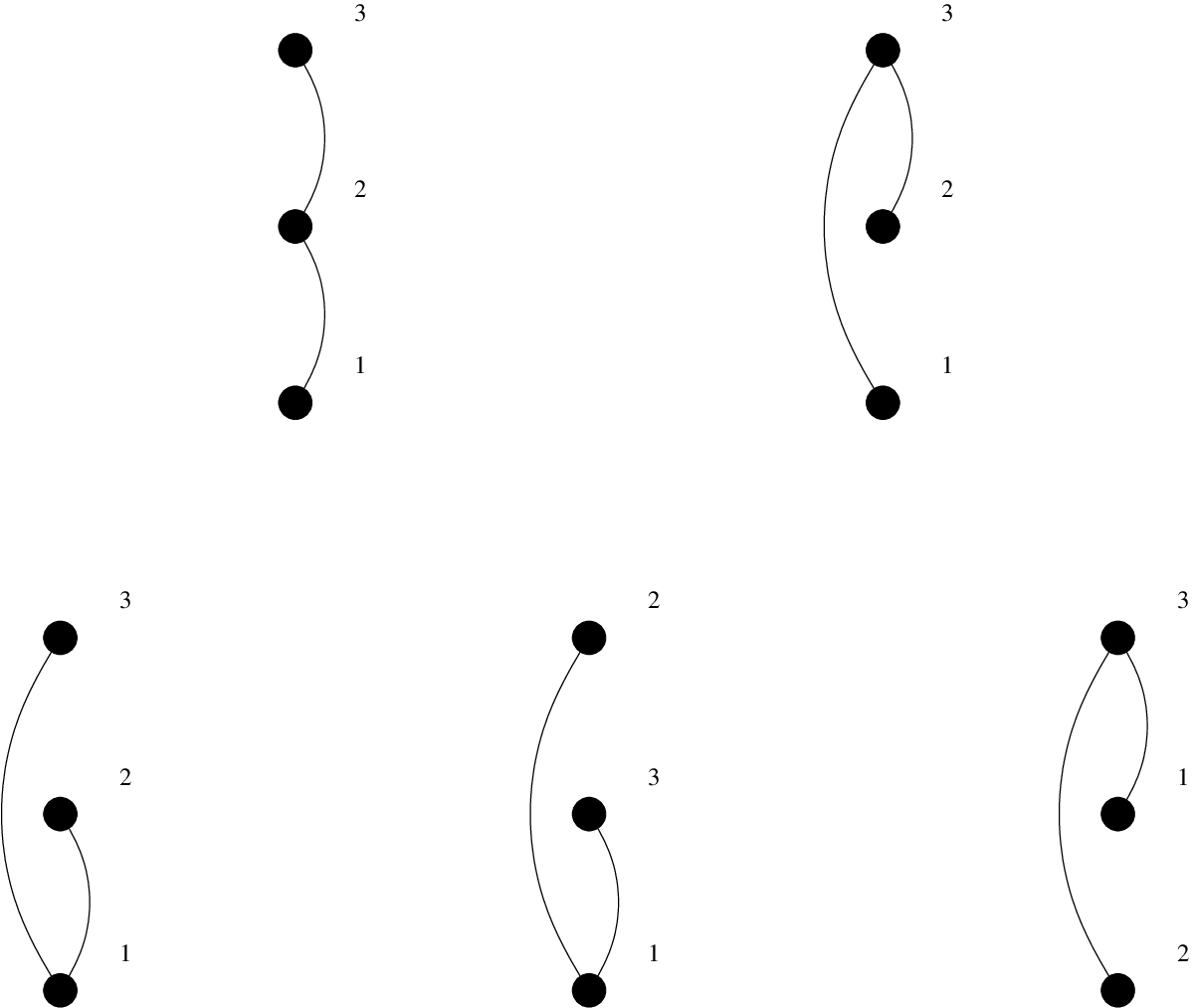}
\end{center}
\caption{All fully tiered trees on $3$ vertices.\label{fig:ft3}}
\end{figure}

For any tier type $p$ let $\cT_{p}$ be the corresponding set of tiered
trees, and let $\cT = \cup_{p} \cT_{p}$.  Our goal now is 
to define a weight function $w \colon \cT \rightarrow \ZZ_{\geq 0}$.

\begin{defn}\label{def:weightoftt}
Let $T\in \cT$ have tiering function $t$.  The \emph{weight} $w (T)$
of $T$ is defined as follows:
\begin{enumerate}
\item If $|T|=1$, i.e.~if $T$ consists of a single vertex, we put
$w (T) = 0$.
\item If $|T|>1$, let $v$ be the vertex with the smallest label, and
let $T_{1}$, \dots , $T_{l}$ be the connected components of the forest
obtained by deleting $v$ from $T$.  We put
\[
w (T) = \sum_{i=1}^{l} (w_{i} + w (T_{i})), 
\] 
where the integers $w_{i}$ are defined as follows.  For each $T_{i}$,
let $u_{i}\in T_{i}$ be the vertex that was connected to $v$.  We have
$t(u_i) > t(v)$ since $v$ had the minimal label.  We define $w_i$ to be the cardinality
of the set 
\begin{equation}\label{eq:Ri}
R_i = \{u_{j}\in T_i \mid \text{$t(u_{j}) > t(v)$ and $u_j < u_i$}\}.
\end{equation}
Thus $w_{i}$ records the (zero-indexed) position of the vertex $u_{i}$
in the ordered list of all those vertices in $T_{i}$ that \emph{could} have been
connected to $v$.
\end{enumerate}
\end{defn}

\begin{ex}\label{ex:exampletree}
We give a detailed example of computing the weight of a tiered tree by
computing $w (T)$ for the tree $T$ in Figure \ref{fig:exampletree}.
First we delete $1$ and obtain the forest $\{T_{i}\mid 1\leq i\leq 4
\}$ shown in Figure \ref{fig:exampleweight}.  We have $w (T) =
\sum_{i=1}^{4}(w_{i}+w (T_{i}))$, and we consider each component in turn:
\begin{enumerate}
\item We have $w (T_{1})=0$, since all tiered trees with only two
vertices have weight $0$.  Also $w_{1}=0$ since only vertex $3$ can be
joined to $1$.
\item We have $w (T_{2}) = 1$, since the minimal vertex $4$ is
connected to $F$ and could have been connected to $E$.  We have $w_{2}
= 2$, since $1$ could have been connected to any of $4<E<F$, and is
connected to $F$.
\item This component is the most complicated.  If we delete $7$, we
see that the tree $\{C \}$ has weight $0$ as does the tree $T' = \{9,A,B,D
\}$.  Furthermore $7$ is joined in $T'$ to $D$ instead of $B$.  Thus
altogether we have $w (T_{3}) = 1$.  Finally $1$ is joined to $B$, the
smallest possible vertex it could have been connected to, so $w_{3}=0$.
\item Finally $w (T_{4}) = 0$, and $1$ is joined to the smallest
possible vertex $6$, so $w_{4}=0$.
\end{enumerate}
Taking all these contributions together, we find 
\[
w (T) = 0+0+2+1+1+0+0 = 4.
\]
\end{ex}

\begin{ex}\label{ex:smalltrees}
One can check that all the trees in Figure
\ref{fig:4verts} have weight $0$ except for the tree on the bottom
right, which has weight $1$.  Indeed, after deleting $1$ from this
tree, we obtain a single tree $T_{1}$ with vertices $\{2,3,4 \}$.  We
have $w (T_{1}) = 0$ and $w_{1} = 1$, since $1$ was originally
connected to $4$ and not $3$.  It is easy to check that there is a
unique maxmin tree on $n$ vertices with $1$ maximum and a unique one
with $n-1$ maxima, and that the weights of both of these trees are
$0$.  This accounts for all the trees in Figure \ref{fig:4verts}.

Similarly, the weights of the trees in Figure \ref{fig:ft3} are
all $0$ except for the upper right tree, which has weight $1$.   In
this case, deleting $1$ yields the weight $0$ tree with vertices
$\{2,3 \}$, and again $w_{1}=1$ since $1$ could have been connected to
$2$ instead of $3$.
\end{ex}

\begin{ex}\label{ex:polys}
Table
\ref{tab:qpolys} gives a short table of the polynomials 
\[
P_{p} (q) = \sum_{T \in \cT_{p}}q^{w (T)}
\]
for various tier types $p$.  The apparent coincidences, such as
$P_{(1,1,1)} = P_{(2,2)}$ and $P_{(1,1,2)} = P_{(2,3)}$ can be
explained geometrically via the material in \S \ref{ss:geometric}.
\end{ex}

\begin{figure}[htb]
\psfrag{1}{$\scriptstyle 1$}
\psfrag{2}{$\scriptstyle 2$}
\psfrag{3}{$\scriptstyle 3$}
\psfrag{4}{$\scriptstyle 4$}
\psfrag{5}{$\scriptstyle 5$}
\psfrag{6}{$\scriptstyle 6$}
\psfrag{7}{$\scriptstyle 7$}
\psfrag{8}{$\scriptstyle 8$}
\psfrag{9}{$\scriptstyle 9$}
\psfrag{a}{$\scriptstyle A$}
\psfrag{b}{$\scriptstyle B$}
\psfrag{c}{$\scriptstyle C$}
\psfrag{d}{$\scriptstyle D$}
\psfrag{e}{$\scriptstyle E$}
\psfrag{f}{$\scriptstyle F$}
\psfrag{t1}{$\scriptstyle T_1$}
\psfrag{t2}{$\scriptstyle T_2$}
\psfrag{t3}{$\scriptstyle T_3$}
\psfrag{t4}{$\scriptstyle T_4$}
\begin{center}
\includegraphics[scale=0.35]{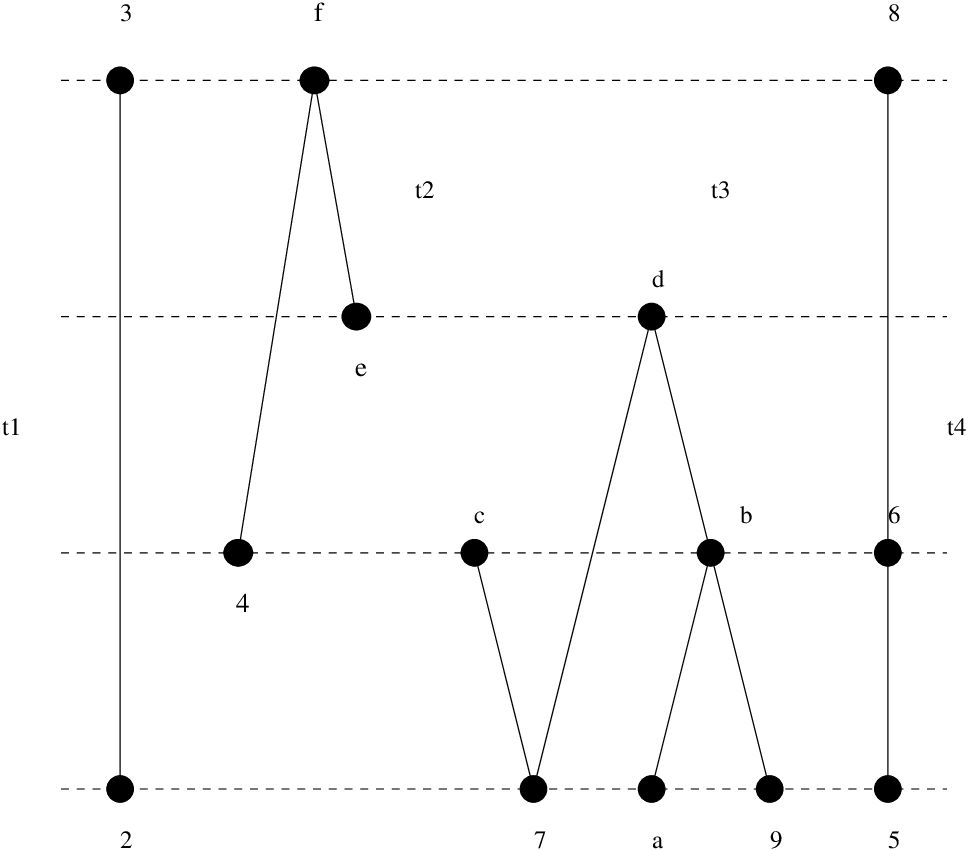}
\end{center}
\caption{The forest arising in computing the weight of the tree in
Figure \ref{fig:exampletree}.\label{fig:exampleweight}}
\end{figure}

\subsection{} Let $T$ be a tiered tree with respect to $t$.  The pair
$(T,t)$ determines a unique maximal tiered graph $K_{t}$ containing
$T$ as a spanning tree.  We simply add all edges to $T$ between
vertices $v,v'$ with $v>v'$ and $t (v)>t (v')$.  We call $K_{t}$ the
\emph{complete tiered graph $K_{t}$ of order $n$ with respect to $t$}.
By construction $K_{t}$ is connected.  It turns out that the weight of
$T$ can be interpreted via the Tutte polynomial of $K_{t}$.  We recall
the definition of this polynomial in the form we need
(cf.~\cite[Exercise 15.15]{godsil}).

Let $G$ be a graph with a total ordering of its edge set, let
$T\subset G$ be a spanning tree, and let $e$ be an edge of $G$.  We
say that $e $ is \emph{internally active} with respect to $T$ if it is
contained in $T$ and is the least element in the cut of $G$ determined
by $e$ and $T$.  We say that $e$ is \emph{externally active} with
respect to $T$ if it is not contained in $T$ and if it is the least
element in the unique cycle determined by $T\cup e$.  The number of
edges internally (respectively, externally) active with respect to $T$
is called the \emph{internal} (resp., \emph{external}) \emph{activity}
of $T$ in $G$.

\begin{defn}\label{def:tutte}
The \emph{Tutte polynomial} $T_{G} (x,y)$ is defined by 
\[
T_{G} (x,y) = \sum_{i,j} t_{ij}x^{i}y^{j},
\]
where $t_{ij}$ is the number of spanning trees of $G$ of internal
activity $i$ and external activity $j$.
\end{defn}

\begin{thm}\label{thm:wtisactivity}
Let $T$ be a tiered tree, and let $K_{t}$ be the complete tiered graph
containing it.  Then the weight $w (T)$ is equal to its external
activity in $K_{t}$, where the edges of $K_{t}$ are ordered
lexicographically by their endpoints.
\end{thm}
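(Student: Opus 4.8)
The natural strategy is induction on $|T|$, matching the recursive structure of Definition~\ref{def:weightoftt}. The base case $|T|=1$ is immediate: a single vertex has weight $0$ and $K_t$ is a single vertex with no edges, hence external activity $0$. For the inductive step, let $v$ be the vertex of $T$ with the smallest label, let $T_1,\dots,T_l$ be the components of $T\setminus v$, and let $u_i\in T_i$ be the neighbor of $v$ in $T$. Write $e_i$ for the edge $\{v,u_i\}$. The key observation is that the edges of $K_t$ split into two groups: those incident to $v$, and those not incident to $v$. I want to show that the external activity of $T$ in $K_t$ is the sum of (a) the number of externally active edges incident to $v$, which I claim equals $\sum_i w_i$, and (b) the number of externally active edges not incident to $v$, which I claim equals $\sum_i w(T_i)$ by the inductive hypothesis applied to each $T_i$ sitting inside the complete tiered graph $K_{t_i}$ on its own vertex set.

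For part (a): an edge $f=\{v,x\}$ of $K_t$ not in $T$ has the form $f=\{v,x\}$ with $t(x)>t(v)$ and $x>v$; say $x\in T_i$. The unique cycle in $T\cup f$ is the path in $T$ from $v$ to $x$, closed up by $f$. Because $v$ is the global minimum, $v$ is the least endpoint-label among all vertices on this cycle, so both edges of the cycle incident to $v$---namely $e_i$ and $f$---are the candidates for being least in the lexicographic order, and among those two, $e_i=\{v,u_i\}$ beats $f=\{v,x\}$ precisely when $u_i<x$. Hence $f$ is externally active if and only if $u_i<x$ is false, i.e.\ $x<u_i$; wait---I need to be careful about the direction. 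Let me restate: $f$ is externally active iff $f$ is the least edge in its cycle; the only competitor that could be smaller is $e_i$ (every other edge of the cycle has both endpoints $>v$, hence lexicographically larger than $\{v,x\}$), so $f$ is externally active iff $f<e_i$ in lex order, i.e.\ iff $x<u_i$. The set of such $x$ is exactly $R_i$ of \eqref{eq:Ri} (vertices of $T_i$ with $t(x)>t(v)$ and $x<u_i$), so the number of externally active edges incident to $v$ is $\sum_i |R_i|=\sum_i w_i$. Note $e_i$ itself is in $T$, so it contributes to internal, not external, activity, and causes no double-counting.

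For part (b): let $f$ be an edge of $K_t$ not incident to $v$ and not in $T$. Both endpoints of $f$ lie in the same component $T_i$ (since $K_t\setminus v$ has the same components as $T\setminus v$---any edge of $K_t$ joining two different $T_i$'s would have to pass through... actually I must check this: two vertices in distinct components $T_i,T_j$ could conceivably be joined by a $K_t$-edge, so I need to argue the unique cycle of $T\cup f$ stays inside $T_i\cup\{v\}$ or handle it directly). The cleaner route: the cycle of $T\cup f$ consists of the path in $T$ between the endpoints of $f$; if $f$'s endpoints are in the same $T_i$ this path lies entirely in $T_i$, and $f$ together with this path is also a cycle in $K_{t_i}\cup f$, with the lexicographic order restricted from $K_t$ agreeing with that on $K_{t_i}$ (the ordering is by endpoint labels, which is intrinsic). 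So such $f$ is externally active in $K_t$ iff it is externally active in $K_{t_i}$. If $f$ joins endpoints in $T_i$ and $T_j$ with $i\neq j$, the path between them in $T$ passes through $v$, so $v$ is the minimum-label vertex on the cycle; then $f$ is not the least edge (the edge $e_i$ or $e_j$ incident to $v$, having $v$ as an endpoint, is lexicographically smaller than $f$ whenever $f$ is not incident to $v$), so $f$ is \emph{not} externally active and contributes nothing. Summing over $i$ and invoking the inductive hypothesis $w(T_i)=\text{ext.\ act.\ of }T_i\text{ in }K_{t_i}$ completes part (b).

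The main obstacle, and the step requiring the most care, is the bookkeeping in part (b): verifying that the inter-component non-tree edges of $K_t$ are never externally active, and that the lexicographic edge order on $K_{t_i}$ is genuinely the restriction of that on $K_t$ so that the inductive hypothesis applies verbatim. Once that is pinned down, adding (a) and (b) gives
\[
\text{ext.\ act.\ of }T\text{ in }K_t \;=\; \sum_{i=1}^{l} w_i \;+\; \sum_{i=1}^{l} w(T_i)\;=\; w(T),
\]
as desired.
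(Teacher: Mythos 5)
Your proof is correct and follows essentially the same route as the paper: the edges of $K_t$ incident to the minimal vertex $v$ account for $\sum_i w_i$ exactly as in the paper's key claim, and the remaining externally active edges are handled recursively within the components $T_i$. Your part (b) --- verifying that inter-component non-tree edges are never externally active and that the lexicographic order on $K_{t_i}$ is the restriction of that on $K_t$ --- simply makes explicit the recursion step that the paper compresses into its final sentence.
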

  
\begin{proof} We first prove that, in the algorithm described in
Definition \ref{def:weightoftt}, $w_i$ is equal to the number of
externally active edges in $K_t$ that connect $v$ to an element of
$T_i$.

To see this, let $u$ be a vertex satisfying 
\[
 u \in \{u' \in T_i \mid \text{$t(u') > t(v)$ and $ u' \neq u_i$}\},
\]
and consider the edge $e_{vu}$ of $K_t$ connecting $v$ to $u$. Because
$T$ is a tree and is connected, we have that $e_{vu} \notin T$ (for
if it was an edge in $T$ then $T$ would contain a cycle). This unique
cycle of $T \cup e_{vu}$ contains $e_{vu}$, $e_{vu_i}$, and no other
edges containing $v$. Since $e_{vu}$ and $e_{vu_i}$ are the first two
edges in the cycle lexicographically, $e_{vu}$ is externally active if
and only if $u < u_i$, i.e.~$ u$ lies in the set $R_i$ from
\eqref{eq:Ri}. Hence the cardinality of $R_i$ is equal to the number
of externally active edges connecting $v$ to $T_i$, and our claim is
proved.

The full result now follows from our claim, because the sum of the $w_i$'s
obtained after deleting the vertex $v$ will now equal the number of
externally active edges that have $v$ as the lower of their two
vertices.
\end{proof}

\subsection{}\label{ss:geometric} We conclude this section by giving
more details about the geometric origin of the weight function.  We
focus on the connection to counting torus orbits, which is simpler to
explain than the connection to representations of quivers.  For more
details, we refer to \cite{supernova}.

Let $q$ be a prime power and let $\FF_{q}$ be the finite field with
$q$ elements.  Let $G (k,n)$ be the Grassmannian of $k$ planes in
$\FF_{q}^{n}$.  The maximal torus $T\simeq
(\FF_{q}^{\times})^{n-1}\subset \PGL_{n} (\FF_{q}) $ acts on $G (k,n)$
via the standard action of $\GL_{n} (\FF_{q})$ on $\FF_{q}^{n}$.  Let
us call a $T$-orbit $O \subset G (k,n)$ \emph{maximal} if its
stabilizer is trivial.

Let $O_{k,n} (q)$ be the number of maximal $T$-orbits in $G (k,n)$, as
a function of $q$.  Then we have (cf.~\cite[Theorem 3.15]{supernova})
\begin{equation}\label{eq:orbitcount}
O_{k,n} (q) = P_{p} (q) = \sum_{T\in \cT_{p}} q^{w (T)},
\end{equation}
where the sum is taken over tiered trees of type $p=(k,n-k)$.

For instance, the variety $G (1,n)$ is just the projective space
$\PP^{n-1}$, which is a toric variety.  The unique dense $T$-orbit in
$G (1,n)$ is maximal and is the only maximal orbit.  The equation
\eqref{eq:orbitcount} becomes $O_{1,n} (q) = 1$, and indeed there is
a unique maxmin tree with $n$ vertices and one maximum
(cf.~Figure \ref{fig:4verts} for $n=4$).

For a more complicated example, consider the Grassmannian $G (2,4)$.
This is not a toric variety and has a more intricate collection of
$T$-orbits.  See for instance \cite[Figure 1]{gs}, which over $\CC$
shows the images of the different types of orbits under the moment
map; the maximal orbits are those whose moment map image is
full-dimensional.

To count the orbits over $\FF_{q}$, we can picture $2$-planes in a
$4$-dimensional vector space $V$ via lines in $\PP^{3} = \PP^{3} (V)$.
We claim there are two types of maximal $T$-orbits, depicted in Figure
\ref{fig:p3generic}).  One, as seen on the left, is given by lines in
$\PP^{3}$ not passing through any of the four $T$-fixed points in
$\PP^{3}$ (the solid black dots).  Any such line $L$ determines four
points (the grey dots), which are the intersection points of $L$ with
the $T$-fixed planes.  The $T$-action preserves the cross-ratio
$\lambda$ of these points; since the points are distinct, $\lambda$ is
an element of $\FF_{q}$ different from $0$ or $1$.  Thus there are
$q-2$ orbits of this type.  The other type of orbit corresponds to
points passing through a single $T$-fixed line and not meeting a
$T$-fixed point.  Any such line lies in a unique maximal orbit
determined by the fixed line it meets.  Thus we have $6$ such orbits.
The expression \eqref{eq:orbitcount} becomes $O_{2,4} (q) = q+4$,
which agrees with Figure \ref{fig:4verts}.

We remark that although \eqref{eq:orbitcount} enumerates maximal
$T$-orbits, there is not a bijection between maxmin trees and the
orbits.  We also remark that general tiered trees can be used to count
orbits in partial flag varieties, where the partition type of the flag
corresponds to the tier type of the tiering function.  Table \ref{tab:qpolys}
gives a short list of the corresponding orbit counts.

\begin{remark}\label{rem:tiertype}
One can show, using geometric results from \cite{supernova}, that the
polynomials $P_{p} (q)$ depend only on the partition determined by the
tier type $p$, and not the order of its parts.  Thus Table
\ref{tab:qpolys} gives a complete list of the generating functions for
weighted tiered trees with up to $6$ vertices.  We do not know an
elementary proof of this fact.
\end{remark}

\begin{figure}[htb]
\begin{center}
\includegraphics[scale=0.25]{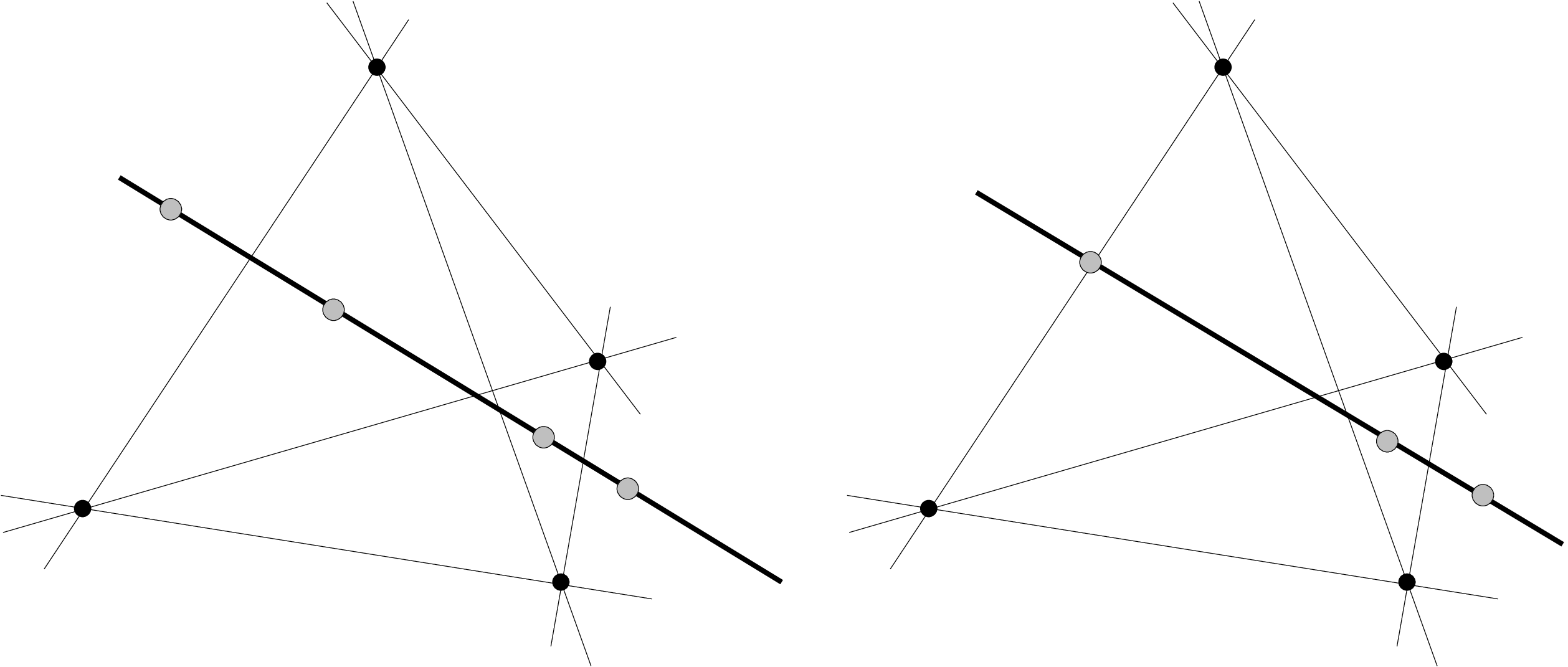}
\end{center}
\caption{The two types of maximal $T$-orbits in $G (2,4)$.  There are
$q-2$ of the left type and $6$ of the right type, giving $q+4$ altogether.\label{fig:p3generic}}
\end{figure}

\begin{table}[htb]
\begin{center}
\begin{tabular}{|l||p{4.5in}|}
\hline
Tier type $p$& $P_{p} (q) = \sum_{T\in \cT_{p}}q^{w (T)}$\\
\hline\hline
$(1, 1, 1)$&$q + 4$\\
\hline
$(2, 2)$&$q + 4$\\
$(1, 1, 2)$&$q^2 + 5q + 11$\\
$(1, 1, 1, 1)$&$q^3 + 6q^2 + 20q + 33$\\
\hline
$(2, 3)$&$q^2 + 5q + 11$\\
$(1, 1, 3)$&$q^3 + 6q^2 + 16q + 26$\\
$(1, 2, 2)$&$q^4 + 6q^3 + 22q^2 + 51q + 66$\\
$(1, 1, 1, 2)$&$q^5 + 7q^4 + 28q^3 + 78q^2 + 152q + 171$\\
$(1, 1, 1, 1, 1)$&$q^6 + 8q^5 + 35q^4 + 111q^3 + 260q^2 + 453q + 456$\\
\hline
$(2, 4)$&$q^3 + 6q^2 + 16q + 26$\\
$(3, 3)$&$q^4 + 6q^3 + 22q^2 + 51q + 66$\\
$(1, 1, 4)$&$q^4 + 7q^3 + 22q^2 + 42q + 57$\\
$(1, 2, 3)$&$q^6 + 7q^5 + 29q^4 + 85q^3 + 190q^2 + 308q + 302$\\
$(2, 2, 2)$&$q^7 + 7q^6 + 30q^5 + 97q^4 + 243q^3 + 487q^2 + 719q + 627$\\
$(1, 1, 1, 3)$&$q^7 + 8q^6 + 36q^5 + 114q^4 + 281q^3 + 549q^2 + 801q + 718$\\
$(1, 1, 2, 2)$&$q^8 + 8q^7 + 37q^6 + 127q^5 + 346q^4 + 766q^3 + 1378q^2 + 1882q + 1533$\\
$(1, 1, 1, 1, 2)$&$q^9 + 9q^8 + 45q^7 + 164q^6 + 479q^5 + 1154q^4 + 2327q^3 + 3868q^2 + \phantom{aaaaa}4957q + 3784$\\
$(1, 1, 1, 1, 1, 1)$&$q^{10} + 10q^9 + 54q^8 + 209q^7 + 649q^6 + 1681q^5 + 3691q^4 + 6921q^3 + \phantom{aaaaa}10805q^2 + 13139q + 9460$\\
\hline
\end{tabular}
\end{center}
\medskip
\caption{Tier types and the corresponding sums over weighted trees.\label{tab:qpolys}}
\end{table}

\section{Counting tiered trees}\label{s:ec}

In this section we will show how to enumerate tiered trees; our
approach closely follows that of Postnikov \cite{postnikov}. 

To this end, it will be convenient to extend the notion of tiered trees
somewhat.  Let $T$ be a tree equipped with a tiering function $t\colon
T \rightarrow \sets{m}$, where $m\geq 2$.  If $t$ is surjective, we
will say that $T$ is \emph{properly tiered}.  Otherwise, if $t$ is not
surjective, we will say that $T$ is \emph{improperly tiered} if at
least two elements of $\sets{m}$ are in the image of $t$.  (Thus we
never consider tiering functions with one element in their image.) In
particular, if $T$ is improperly tiered and has the maximum number of empty
tiers, then $T$ canonically has the structure of a maxmin tree.

Let $T_{n,m}$ be the number of properly and improperly tiered trees on
$n$ vertices with $m$ tiers.  For fixed $m$ let $T_{m}$ be the
exponential generating function of the $T_{n,m}$:

$$T_{m}(x) = \sum_{n \geq 1} T_{n+1, m} \frac{x^n}{n!}.$$ 

\begin{prop}\label{prop:fe}
The generating function $T_m(x)$ satisfies the functional equation 
\[
T_m(x) = \sum_{k=1}^{m} e^{\frac{(m-k)x}{m} \cdot (1 + T_m(x))}.
\]
\end{prop}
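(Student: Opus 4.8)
The plan is to follow Postnikov's strategy for intransitive trees \cite{postnikov}: set up a recursive decomposition of tiered trees obtained by distinguishing one vertex, and then translate that decomposition into a relation among exponential generating functions. The natural vertex to distinguish is the minimum-labeled vertex $v$, exactly as in the weight recursion of Definition~\ref{def:weightoftt}. Given a tiered tree $T$ on $n+1$ vertices with $m$ tiers, the vertex $v$ lies on some tier $k\in\sets{m}$, and deleting it produces a forest whose components $T_1,\dots,T_l$ are again (properly or improperly) tiered trees on $m$ tiers, possibly single vertices, each carrying a marked vertex $u_i$ (the former neighbor of $v$) with $t(u_i)>k$. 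Conversely, a tiered tree on $n+1$ vertices is exactly the data of: the tier $k$ of its minimum; a set partition of the remaining $n$ labels; a tiered-tree structure on each block; and, in each block, the choice of a vertex lying strictly above tier $k$. The degenerate case in which $v$ is joined to nothing (a lone vertex) is the source of the $k=m$ summand, whose exponent $\frac{(m-m)x}{m}(1+T_m)$ vanishes.

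Next I would pass to exponential generating functions, with $x$ tracking the vertices other than the minimum (as in the definition of $T_m$). For a fixed tier $k$ of the minimum, the blocks form an unordered set of structures, so by the exponential formula the EGF of all tiered trees whose minimum lies on tier $k$ is $\exp\bigl(C_k(x)\bigr)$, where $C_k(x)$ is the EGF of a single block together with a marked admissible attaching vertex (a vertex on a tier above $k$). Summing over $k=1,\dots,m$ yields $T_m(x)=\sum_{k=1}^{m}\exp\bigl(C_k(x)\bigr)$, and the proposition then amounts to the evaluation of the single-piece EGF $C_k(x)$, namely $C_k(x)=\frac{(m-k)x}{m}\bigl(1+T_m(x)\bigr)$.

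Computing $C_k(x)$ is the heart of the argument and the step I expect to be the main obstacle. Heuristically the three factors should read: $1+T_m(x)$ records that a block is either a lone vertex or a nontrivial tiered tree; the $x$ is the contribution of the marked attaching vertex; the $m-k$ counts the tiers lying above tier $k$; and the $\frac{1}{m}$ is a normalization arising from recording the attaching tier against the full set $\sets{m}$ rather than the window $\{k+1,\dots,m\}$. The delicate point is that the number of admissible attaching vertices is not constant across trees of a given size, so one cannot simply multiply $1+T_m$ by a monomial. I would handle this by a re-rooting bijection: re-root each block at its marked vertex and peel that vertex off, re-expressing the block as a tiered tree rooted at its own minimum (hence enumerated by $1+T_m$) together with bookkeeping for the tier at which the peeled vertex is re-inserted; it is this aggregation — showing that the admissible-vertex counts sum over all blocks of a given size to precisely the predicted value — that produces the coefficient $\frac{(m-k)x}{m}$. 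Once this is established, I would check the small cases and the constant term to confirm that the degenerate single-vertex contributions are normalized as in the statement, and the functional equation follows by comparing the two sides as formal power series.
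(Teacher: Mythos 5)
Your decomposition at the minimum-labeled vertex is sound, and it is essentially the mirror image of the paper's argument, which deletes the maximum-labeled vertex $n$ and views the resulting components as trees rooted at the former neighbors of $n$ (any vertex on a tier below that of $n$ is an admissible root there, just as any vertex above tier $k$ is admissible in your setup). So the structural part of your proposal matches the paper. The gap is exactly where you say you expect it: the evaluation $C_k(x)=\frac{(m-k)x}{m}\,(1+T_m(x))$ is never established. Writing $M_i(x)$ for the exponential generating function of tiered trees with a marked vertex on tier $i$, your $C_k$ is $M_{k+1}(x)+\dotsb+M_m(x)$, and since $x\,(1+T_m(x))=\sum_j j\,T_{j,m}\,x^j/j!$ is the generating function of all rooted tiered trees, the identity you need is precisely the equidistribution statement $M_{i,j,m}=\frac{j}{m}T_{j,m}$ for every tier $i$, i.e.\ that over all tiered trees on $j$ vertices the tier of a marked vertex is uniformly distributed. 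This is not a routine aggregation: the paper deduces it from Remark~\ref{rem:tiertype} (the counts are symmetric under permuting the part sizes of the tier type), and that symmetry is itself known only through the geometric results of \cite{supernova} --- the paper states explicitly that no elementary proof is known. The obvious elementary symmetry (complement the labels $j\mapsto n+1-j$ and reverse the tier order) only yields $M_i=M_{m+1-i}$, not full equidistribution, so the ``re-rooting bijection'' you invoke would be genuinely new content; moreover the version you sketch does not parse as stated, since peeling the marked vertex off a block generally produces a forest rather than a single tiered tree rooted at its minimum, and the ``bookkeeping for the tier at which the peeled vertex is re-inserted'' is exactly the quantity whose uniformity is in question.

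In short, your reduction of the proposition to the marked-block lemma is correct and parallel to the paper's proof, but that lemma --- the only nontrivial input --- is asserted rather than proved, so the proposal as written does not prove Proposition~\ref{prop:fe}. To complete it along the paper's lines, invoke Remark~\ref{rem:tiertype} to get root-tier equidistribution and hence $M_i(x)=\frac{x}{m}(1+T_m(x))$ for all $i$, then sum over the admissible tiers; a self-contained combinatorial completion would require an explicit bijection between trees marked on tier $i$ and trees marked on tier $i'$, which is an open problem rather than the routine verification your sketch suggests. (A minor point, shared with the paper itself: the $k=m$ summand and the constant terms require a convention such as counting the one-vertex tree, since $T_m(x)$ as defined has no constant term; this is bookkeeping, not the real issue.)
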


\begin{proof}
It is convenient to consider rooted trees instead.  Let us say that a
rooted tiered tree is \emph{$M_{i}$-rooted} if its root lies in the
$i$th tier. Then it follows from Remark \ref{rem:tiertype} that given $m$ and $n$, the total
number $M_{i,n,m}$ of $M_i$-rooted trees is
\begin{equation}\label{eq:mandt}
M_{i,n,m} = \frac{n}{m} \cdot T_{n,m}.
\end{equation}
Let us put $M_{i,1,m} =1$, and
let $M_i(t)$ be the exponential generating function for the $M_{i,n,m}$:
$$M_i(t) := \sum_{n \geq 1} M_{i,n,m} \frac{t^n}{n!}$$ 
Then using \eqref{eq:mandt}, we have
\begin{equation}\label{eq:mandt2}
M_i(t) = \frac{t}{m} ( 1 + T_{m}(t)).
\end{equation}
In particular, the generating function $M_{i} (t)$ is independent of
$i$.

Now we consider how to create a tiered tree on $n$ vertices with
tiering function taking values in $\sets{m}$.  We claim that $T_{m}
(x)$ will satisfy the relation
\begin{equation}\label{eq:fundrelation}
T_m(x) = e^{M_1(x) + M_2(x) + \dotsb + M_{m-1}(x)} + e^{M_1(x) + \dotsb +
M_{m-2}(x)} + \dotsb + e^{M_1(x)}.
\end{equation}

To see this, suppose first that $m = 2$, as in the case of maxmin
trees. Then, as shown in \cite{postnikov}, we create a maxmin tree on
$n$ vertices by taking all possible forests of rooted trees with $n-1$
vertices and with root a minimum and by attaching the $n$th vertex---a
maximum---to the root. Note that the new node must be a maximum because
it has the largest label. The total number of ways to do this is given
by $T_2(x) = e^{M_1(x)}$.

Now suppose we want to do the same for $m = 3$. We start with all
possible forests having $n-1$ vertices, but now we have a few cases to
consider. Although the $n$th node we use to connect the forests
together has the highest label, we can still form a tree in one of two
ways.  First, if we place vertex $n$ in the highest tier, it can form
a valid tiered tree by being connected to trees rooted either in the
second tier or those whose roots are in the first. The number of ways
to do this is $e^{M_1(x) + M_2(x)}$.  On the other hand, if we place
vertex $n$ in the middle tier, then it can still be used to form a
valid tiered tree, but only by connecting to trees rooted in the
lowest tier. Hence the number of ways for this to happen is given by
$e^{M_1(x)}$. Combining these cases, we find that for $m=3$ we have
$$T_3(x) = e^{M_1(x) + M_2(x)} + e^{M_1(x)}$$

If we continue this process by adding more tiers, by similar arguments
we arrive at \eqref{eq:fundrelation}.  In particular the term 
\[
e^{M_{1} (x) + \dotsb + M_{k-1} (x)} 
\]
in \eqref{eq:fundrelation} counts the contribution of those rooted
tiered trees with root $n$ in the tier $k$.

Finally, using the fact shown above that all $M_i(x)$ are equal, say
to $M_{1} (x)$, and applying \eqref{eq:mandt2} we find
\[
T_m(x) = e^{\frac{(m-1)x}{m} \cdot (1 + T_m(x))} + e^{\frac{(m-2)x}{m}
\cdot (1 + T_m(x))} +\dotsb  + e^{\frac{x}{m} \cdot (1 + T_m(x))}
\]
as desired.
\end{proof}

\begin{thm}
The total number $T_{n,m}$ of tiered trees (proper and improper) with
$n$ vertices and with $m$ tiers is given by 
\begin{equation}\label{eq:total}
T_{n,m} = \frac{1}{nm^{n-1}} \sum_{k_{i}\geq 0\text{ with }\sum k_i =
n}^{}\binom{n}{k_1, k_2,\dotsc , k_m}((m-1) \cdot k_1 + (m-2) \cdot k_2 +
\dotsb  + k_{m-1})^{n-1}.
\end{equation}
\end{thm}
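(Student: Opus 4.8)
The plan is to solve the functional equation of Proposition~\ref{prop:fe} by Lagrange inversion and then extract coefficients via the multinomial theorem. Write $F(x)=1+T_m(x)$ and substitute $z=z(x):=x\,F(x)=x\bigl(1+T_m(x)\bigr)$, a formal power series with $z(0)=0$. Feeding $z$ back into the functional equation rewrites it as $F(x)=\phi\bigl(z(x)\bigr)$, where
\[
\phi(z):=\sum_{k=1}^{m}e^{\frac{m-k}{m}z},
\]
and since $z=xF$ one simultaneously has $z=x\,\phi(z)$. One must be careful with constant terms in this reduction: the summand $k=m$ equals $e^{0}=1$, and tracking it correctly is exactly what makes $F(x)=\phi(z(x))$ hold, so that the $n$th power of $\phi$ below involves precisely the $m$ exponentials and no spurious extra constant. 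Since $\phi(0)=m\neq 0$, the equation $z=x\phi(z)$ has a unique formal power series solution with $z(0)=0$, and unwinding the substitution shows the corresponding $F$ is the unique solution of the functional equation; hence it is legitimate to compute with $z$.

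Next I would apply the Lagrange inversion formula: for $z=x\phi(z)$ with $\phi(0)\neq0$ and any formal power series $H$, $[x^{n}]\,H(z(x))=\tfrac1n\,[z^{n-1}]\bigl(H'(z)\,\phi(z)^{n}\bigr)$. Taking $H(z)=z$ gives $[x^{n}]z(x)=\tfrac1n[z^{n-1}]\phi(z)^{n}$, and since $F(x)=z(x)/x$, for $n\geq2$ we obtain
\[
\frac{T_{n,m}}{(n-1)!}=[x^{n-1}]T_m(x)=[x^{n-1}]F(x)=[x^{n}]z(x)=\frac1n\,[z^{n-1}]\phi(z)^{n},
\]
using the exponential generating function convention for $T_m$ in the first equality and the fact that the constant $1$ in $F$ affects only the $x^{0}$ coefficient in the second. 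Thus $T_{n,m}=\tfrac{(n-1)!}{n}\,[z^{n-1}]\phi(z)^{n}$; the degenerate case $n=1$ is checked by inspection.

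It remains to expand $\phi(z)^{n}$. By the multinomial theorem,
\[
\phi(z)^{n}=\Bigl(\sum_{k=1}^{m}e^{\frac{m-k}{m}z}\Bigr)^{n}=\sum_{\substack{k_1,\dots,k_m\geq0\\ k_1+\dots+k_m=n}}\binom{n}{k_1,\dots,k_m}\exp\!\Bigl(\frac{z}{m}\sum_{i=1}^{m}(m-i)k_i\Bigr),
\]
and since $[z^{n-1}]e^{cz}=c^{n-1}/(n-1)!$ and $\sum_{i=1}^{m}(m-i)k_i=(m-1)k_1+(m-2)k_2+\dots+k_{m-1}$, this gives
\[
[z^{n-1}]\phi(z)^{n}=\frac{1}{(n-1)!\,m^{n-1}}\sum_{\substack{k_i\geq0\\ \sum k_i=n}}\binom{n}{k_1,\dots,k_m}\bigl((m-1)k_1+(m-2)k_2+\dots+k_{m-1}\bigr)^{n-1}.
\]
Multiplying by $(n-1)!/n$ yields exactly \eqref{eq:total}.

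The step I expect to be the real obstacle --- though a minor one --- is the constant-term bookkeeping in the first paragraph: one must choose the substitution and distribute the constants among $T_m$, $F=1+T_m$, $z=xF$, and $\phi$ so that $z=x\phi(z)$ holds with $\phi$ equal to exactly $\sum_{k=1}^{m}e^{(m-k)z/m}$. It is this precise normalization that produces the $m$-part multinomial coefficient and the exponents $m-1,m-2,\dots,1,0$ appearing in \eqref{eq:total}; a misplaced constant would instead give an $(m+1)$-part sum that does not simplify to the claimed formula. Once the reduction is set up correctly, the Lagrange inversion and the multinomial expansion are entirely routine.
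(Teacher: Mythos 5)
Your proposal is correct and is essentially the paper's own proof: the paper likewise sets $D(x)=x\bigl(1+T_m(x)\bigr)$, rewrites Proposition \ref{prop:fe} as $D=x\bigl(e^{\frac{m-1}{m}D}+\dotsb+e^{\frac{1}{m}D}+1\bigr)$, applies Lagrange inversion, and then expands by the multinomial theorem exactly as you do. The constant-term bookkeeping you flag is indeed the delicate point --- the paper's statement of Proposition \ref{prop:fe} and its proof already disagree there, and strictly speaking the unique solution of $z=x\phi(z)$ is $x\bigl(m+T_m(x)\bigr)$ rather than $x\bigl(1+T_m(x)\bigr)$, which is harmless because the two series differ only in the linear coefficient and only $n\ge 2$ is extracted --- but your handling of this matches the paper's own.
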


We remark that substituting $m=2$ into \eqref{eq:total} yields
Postnikov's expression for the number of maxmin trees with $n$ vertices:
$$T_{n,2} = \frac{1}{n2^{n-1}} \cdot \sum_{k =
1}^{n}\binom{n}{k}k^{n-1}.$$

\begin{proof}
By Proposition \ref{prop:fe}, we have
\begin{equation}\label{eq:former}
T_{m}(x) = e^{\frac{(m-1)x}{m} \cdot (1 + T_m(x))} +
e^{\frac{(m-2)x}{m} \cdot (1 + T_m(x))} + \dotsb  + e^{\frac{x}{m} \cdot
(1 + T_m(x))}.
\end{equation}
Let $D(x) = x(1+T_{m}(x))$. Then $T_{m}(x) = {D(x)}/{x} - 1$, and
substituting in \eqref{eq:former} we find
\[
D(x)= x(e^{\frac{m-1}{m} \cdot D(x)} + e^{\frac{m-2}{m} \cdot D(x)} +\dotsb  + e^{\frac{1}{m}\cdot D(x)} + 1).
\]
We can now use Lagrange inversion to find the coefficients of
$T_{m}(x)$.  Recall that this says that a formal power series $f (x) =
a_{1}x+a_{2}x^{2} + \dotsb$, $a_{1}\not =0$ has compositional inverse
$f^{-1} (x) = b_{1}x + b_{2}x^{2} + \dotsb$ whose coefficients satisfy
\[
b_{n} = \frac{1}{n}[x^{-1}] (f (x)^{-n}),
\]
where for any formal power series $[x^{k}] (f (x))$ denotes the
coefficient of $x^{k}$ (cf.~\cite{stanley}).  Applying this to our
series, we find
\[
[x^n]D(x) = \frac{1}{nm^{n-1}}\sum \binom{n}{k_1, k_2, \dotsc , k_m}\cdot
\frac{((m-1)\cdot k_1 + (m-2)\cdot k_2 + \dotsb + k_{m-1})^{n-1}
}{(n-1)!}.
\]
But the coefficients of $D(x)$ are just those of $T_{m}(x)$ shifted,
which implies
\[
T_{n,m} = \frac{1}{nm^{n-1}}\sum \binom{n}{k_1, k_2, \dotsc , k_m}\cdot
((m-1)\cdot k_1 + (m-2)\cdot k_2 + \dotsb  + k_{m-1})^{n-1}.
\]
This completes the proof.
\end{proof}

Let $P_{n,m}$ denote the properly tiered trees on $n$ vertices with
$m$ tiers.  We can easily evaluate $P_{n,m}$ using the principle of
inclusion-exclusion:

\begin{prop}\label{prop:properly}
We have 
\begin{equation}\label{eq:prop}
P_{n,m} = \sum_{k=0}^{m-2} (-1)^{k}\binom{m}{m-k}T_{n,k}.
\end{equation}
\end{prop}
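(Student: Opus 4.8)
The plan is to prove \eqref{eq:prop} by straightforward inclusion--exclusion on the set of empty tiers. Let me set up the count carefully. The quantity $T_{n,m}$ counts all tiered trees (proper and improper) on $n$ vertices with tiering functions into $\sets{m}$, subject to the constraint that at least two tiers are nonempty. By definition $P_{n,m}$ counts those for which \emph{all} $m$ tiers are nonempty, i.e.\ the tiering function is surjective onto $\sets{m}$. So I want to pass from ``all tierings using at least $2$ of the $m$ labels'' to ``tierings using all $m$ labels.''

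First I would observe the key structural fact: an (improperly) tiered tree on $n$ vertices with $m$ tiers that actually uses exactly $j$ of the tiers, with $2 \le j \le m$, is the same data as a properly tiered tree on $n$ vertices with $j$ tiers, together with a choice of which $j$ of the original $m$ labels are used. Here it is essential that the adjacency/monotonicity conditions in Definition~\ref{def:tt} depend only on the induced total order on the \emph{occupied} tiers, not on the ambient set $\sets{m}$; relabeling the occupied tiers order-preservingly to $\sets{j}$ is a bijection. Hence
\[
T_{n,m} = \sum_{j=2}^{m} \binom{m}{j} P_{n,j}.
\]
Next I would invert this relation. This is a standard binomial-type inversion: writing the system for $m = 2, 3, \dotsc$ as a unitriangular linear system relating the vectors $(T_{n,j})$ and $(P_{n,j})$, one solves for $P_{n,m}$ in terms of the $T_{n,j}$. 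Concretely, because $\binom{j}{j} = 1$, the matrix $\bigl(\binom{j}{i}\bigr)_{i,j}$ is upper-unitriangular (after reindexing), and its inverse has entries $(-1)^{j-i}\binom{j}{i}$. Applying this gives
\[
P_{n,m} = \sum_{j=2}^{m} (-1)^{m-j} \binom{m}{j} T_{n,j},
\]
and then reindexing by $k = m - j$ (so $j = m-k$, $k$ running from $0$ to $m-2$) yields exactly \eqref{eq:prop}, with the understanding that $T_{n,j}$ for $j < 2$ is interpreted as $0$ (consistent with the convention that tiering functions with $\le 1$ occupied tier are not considered); alternatively one checks the $j=0,1$ terms are absent because the sum in \eqref{eq:prop} stops at $k = m-2$.

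The main obstacle — really the only point requiring care rather than routine verification — is justifying the bijection underlying $T_{n,m} = \sum_j \binom{m}{j} P_{n,j}$, namely that the tiered-tree structure is genuinely insensitive to the ambient label set and depends only on the order type of the occupied tiers. One must check that both conditions in Definition~\ref{def:tt} (no edge within a tier; the larger-labeled endpoint of an edge lies in a strictly higher tier) are preserved under an order-preserving relabeling of occupied tiers, which is immediate since both conditions refer only to $=$ and $>$ among tier values. A secondary bookkeeping point is handling the lower limit of summation: since ``improperly tiered'' already requires at least two occupied tiers, the $j=0$ and $j=1$ contributions never arise, so no separate convention is needed and the inclusion--exclusion alternating sum truncates naturally at $k=m-2$. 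Once these points are settled, the computation is a one-line binomial inversion.
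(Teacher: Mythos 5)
Your proof is correct and is in essence the same argument the paper gives: both rest on inclusion--exclusion over the set of occupied tiers, the only difference being that you package it as a one-step binomial inversion of the relation $T_{n,m}=\sum_{j\geq 2}\binom{m}{j}P_{n,j}$ (whose justification, as you note, is just that the tiering conditions depend only on the relative order of the occupied tiers), whereas the paper runs an induction on $m$, subtracting the improperly tiered trees and substituting the previously obtained expressions; your packaging is if anything cleaner. One caveat: the formula you actually derive is $P_{n,m}=\sum_{k=0}^{m-2}(-1)^{k}\binom{m}{m-k}T_{n,m-k}$, which is not literally \eqref{eq:prop} as printed, since the paper writes $T_{n,k}$ there; the printed index is a typo, because for $m=3$ the printed form would read $T_{n,0}-3T_{n,1}=0$, while the paper's own computation gives $P_{n,3}=T_{n,3}-3T_{n,2}$, in agreement with your formula and with the values in Table \ref{tab:tandp} (e.g.\ $306-4\cdot 72+6\cdot 7=60=P_{4,4}$). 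So your claim that the reindexing yields ``exactly'' \eqref{eq:prop} should be understood as yielding the intended, corrected statement; with that reading your argument is complete.
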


\begin{proof}
The proof is by induction on the number of tiers $m$.  We have
$P_{n,2} = T_{n,2}$ by definition.  For $m = 3$, the only possible
improperly tiered trees are those that take up two tiers, leaving the
third empty.  Thus we have $P_{n, 3} = T_{n, 3} -
\binom{3}{2}P_{n,2}$, and since $P_{n,2} = T_{n,2}$, we find $T_{n, 3}
- \binom{3}{2}T_{n,2}$.

In general, we have
\[
P_{n,m} = T_{n,m} + \sum_{k=1}^{m-2} (-1)^{k}\binom{m}{m-k}P_{n,m-k}.
\]
To pass to \eqref{eq:prop}, one substitutes the previously obtained
expressions for $P_{n,k}$, $k<m$ and uses elementary properties of
binomial coefficients.
\end{proof}
We conclude this section by giving a short table (Table
\ref{tab:tandp}) of the numbers $T_{n,m}$ and $P_{n,m}$.  We remark
that the $P_{n,m}$ can also be recovered from the polynomials in Table
\ref{tab:qpolys} by substituting $q=1$ and combining various terms.
For instance $P_{4,4} = P_{(1,1,1,1)} (1) = 60$, and $P_{4,3} =
3P_{(1,1,2)} (1) = 3\cdot 17=51$.

\begin{table}[htb]
\begin{center}
\begin{tabular}{|c|c|c|c||c|c|c|c||c|c|c|c||}
\hline
$n$&$m$&$T_{n,m}$&$P_{n,m}$&$n$&$m$&$T_{n,m}$&$P_{n,m}$&$n$&$m$&$T_{n,m}$&$P_{n,m}$\\
\hline
\hline
3&1&0&0&4&4&306&60&6&1&0&0\\
3&2&2&2&5&1&0&0&6&2&246&246\\
3&3&11&5&5&2&36&36&6&3&8868&8130\\
4&1&0&0&5&3&693&585&6&4&80496&46500\\
4&2&7&7&5&4&4304&1748&6&5&400200&83940\\
4&3&72&51&5&5&16274&1324&6&6&1414050&46620\\
\hline
\end{tabular}
\end{center}
\medskip
\caption{The numbers $T_{n,m}$ and $P_{n,m}$.\label{tab:tandp}}
\end{table}
\section{Weight zero maxmin trees and Eulerian numbers}\label{s:en}

Define a polynomial $\sT_{n}(x,q)$ by 
\[
\sT_{n} (x,q) = \sum_{T}  x^{k (T)} q^{w (T)},
\]
where $T$ ranges over all maxmin trees with $n$ vertices, and for any
tree $T$, $w (T)$ is its weight and $k (T)$ is its number of maxima.
Using Table \ref{tab:qpolys}, the first few polynomials are
\begin{align*}
\sT_{4} &= (x+x^{3}) + x^{2} (q+4),\\
\sT_{5} &= (x+x^{4}) +
(x^{2}+x^{3})(q^{2}+5q+11),\\
\sT_{6} &= (x+x^{5})+(x^{2} +x^{4})(q^3 + 6q^2 + 16q + 26) +x^{3} (q^4 + 6q^3 + 22q^2 + 51q + 66).
\end{align*}
If we set $q=0$, we find 
\begin{equation}\label{eq:eulerianpoly}
\sT_{n} (x,0) = \sum_{k=1}^{n-1} A (k-1,n-1) x^{k},
\end{equation}
where $A (k,n)$ is the Eulerian number (the number of permutations in
$S_n$ with $k$ descents).  Thus $\sT_{n} (x,0)$ is essentially the Eulerian
polynomial.

In fact, it is not difficult to prove the relationship
\eqref{eq:eulerianpoly} using generating functions.  More precisely,
the generating function of the Eulerian numbers is well known.
Postnikov \cite{postnikov} enumerated all maxmin trees on $n$
vertices, which means he computed the specialization $\sT_n (x,1)$.
One can modify his result to include the weight parameter $q$ so that
setting $q=0$ finds the generating function for the Eulerian numbers.

Our goal in this section is to give a bijective proof of
\eqref{eq:eulerianpoly}.  This has the advantage of revealing a
connection between permutations and maxmin trees that was previously
unknown.  It will also enable us later (\S\ref{s:wp}) to define a
$q$-analogue of the Eulerian numbers.

\begin{thm}\label{th:wt0}
There is a bijection between permutations in $S_{n}$ with $k$ descents
and weight $0$ maxmin trees on $(n+1)$ vertices with $(k+1)$ maxima.
\end{thm}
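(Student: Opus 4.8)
The plan is to unwind the weight-zero condition into a simple recursive structure, match that structure with labelled increasing trees, and then invoke the classical correspondence between increasing trees and permutations.

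\emph{Step 1: a recursive description of weight-zero maxmin trees.} Let $T$ be a maxmin tree on $\{1,\dots,N\}$, let $v_0=1$ be its minimal vertex, and let $C_1,\dots,C_l$ be the connected components of $T\setminus v_0$. Each $C_i$ is again a maxmin tree (deleting a vertex only relaxes the constraints), the vertex $v_0$ is joined to a single vertex $u_i$ of $C_i$, and $u_i$ is necessarily a maximum of $C_i$ because $v_0<u_i$. Read in this situation, Definition~\ref{def:weightoftt} says that $w(T)=0$ if and only if $w(C_i)=0$ for every $i$ \emph{and} each $u_i$ is the smallest-labelled maximum of its component $C_i$; conversely, given any family of disjoint weight-zero maxmin trees whose vertex sets partition $\{2,\dots,N\}$, attaching a new vertex $1$ to the smallest maximum of each of them produces a weight-zero maxmin tree on $\{1,\dots,N\}$. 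Moreover, in this construction the maxima of $T$ are exactly the union of the maxima of the components $C_i$, so the number of maxima of $T$ is the sum of the numbers of maxima of the $C_i$; here a one-vertex tree is counted as having one maximum.

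Iterating this description, peeling off the least vertex at each stage, encodes $T$ bijectively by an increasing rooted tree $R(T)$ on $\{1,\dots,N\}$ (an increasing tree being a rooted labelled tree in which every non-root vertex has a larger label than its parent, so that the root must be $1$): the children of the root of $R(T)$ are the least elements of the blocks $C_1,\dots,C_l$, and the subtree hanging from $\min C_i$ is $R(C_i)$, recursively. One recovers $T$ from $R$ by reversing this: at each vertex of $R$ its set of children is the set of minima of the blocks, so the blocks are the vertex sets of the corresponding subtrees, so recursively one knows the weight-zero maxmin structure on each block, and hence where the vertex must be reattached. Thus $T\mapsto R(T)$ is a bijection from weight-zero maxmin trees on $\{1,\dots,N\}$ to increasing trees on $\{1,\dots,N\}$, and a short induction using the previous paragraph shows that the number of maxima of $T$ equals the number of leaves of $R(T)$. (Both sets have $(N-1)!$ elements, as a check.)

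\emph{Step 2: increasing trees versus permutations.} It remains to give, for every $N$, a bijection between increasing trees on $\{1,\dots,N\}$ with $j$ leaves and permutations in $S_{N-1}$ with $j-1$ descents; this is the classical bijection between increasing trees and permutations, and the point is to check that it carries leaves to descents. I would argue by induction on $N$ via insertion of the largest letter, respectively the largest vertex. Inserting the value $N-1$ into a permutation $\sigma'\in S_{N-2}$ leaves $\des$ unchanged at exactly the $\des(\sigma')+1$ slots consisting of the last slot and the descents of $\sigma'$, and raises $\des$ by one at the other $(N-2)-\des(\sigma')$ slots. On the other side, attaching a new vertex $N$ as a leaf below an existing \emph{leaf} of an increasing tree $R'$ on $\{1,\dots,N-1\}$ does not change the number of leaves, and there are exactly as many such positions as $R'$ has leaves, whereas attaching $N$ below a \emph{non-leaf} vertex increases the number of leaves by one, and there are $(N-1)-(\text{number of leaves of }R')$ such positions. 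Since by induction the number of leaves of $R'$ equals $\des(\sigma')+1$, the two menus of choices have the same size in each of the two cases, so once we fix an order on the slots and on the vertices and decree that the $i$-th descent-preserving slot corresponds to the $i$-th leaf and the $i$-th descent-raising slot to the $i$-th non-leaf, we obtain a bijection under which leaves correspond to descents. Composing with Step 1 and taking $N=n+1$ proves the theorem.

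\emph{Main obstacle.} Step 1 is merely a careful reading of Definition~\ref{def:weightoftt}, the only care needed being the degenerate cases (single-vertex blocks, the empty permutation, and the convention that a one-vertex tree has one maximum). The substantive point is Step 2: one must pin the increasing-tree/permutation insertion bijection down precisely enough --- in particular fix the orderings that pair slots with vertices --- for the induction to be well defined and for the leaf-versus-descent count to propagate cleanly. One can instead phrase the whole construction directly on maxmin trees (adding $n+1$ as a new pendant edge, or subdividing an existing pendant edge, according to whether the newly inserted letter creates a descent), but this carries the same combinatorial content.
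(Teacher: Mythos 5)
Your proposal is correct, but it reaches the theorem by a different route than the paper. You analyze the weight-zero condition on the tree side exactly as the paper implicitly does (weight $0$ means that, recursively, the deleted minimum was joined to the smallest maximum of each component), but you then package this as a bijection between weight-zero maxmin trees with $j$ maxima and increasing trees with $j$ leaves, and finish by proving the classical equidistribution of leaves of increasing trees with $\des+1$ via an insertion induction. The paper instead builds an explicit recursive bijection directly from permutations to weight-zero trees: append $n+1$, split $\pi=\pi_L\cdot 1\cdot\pi_R$, cut $\pi_L$ into blocks ending at successive maxima, recurse on the (flattened) blocks, and join the vertex $1$ to the smallest maximum of each resulting component. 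What the paper's construction buys is canonicity: the specific decomposition $\pi_1,\dots,\pi_l,\pi_R$ is reused verbatim in \S\ref{s:wp} to define the weight of a permutation (Definition~\ref{def:permwt}) and in Theorems~\ref{thm:stirling} and~\ref{thm:maxwt}, whereas your Step~2 bijection exists only after arbitrarily pairing equinumerous sets of insertion slots with attachment vertices, so by itself it would not support those later definitions (though you could repair this by using the explicit classical map in which the parent of a letter is its nearest smaller letter to the left, under which leaves correspond to descent positions plus the last position). What your route buys is a clean intermediate statement of independent interest --- weight-zero maxmin trees on $N$ vertices with $j$ maxima are in bijection with increasing trees on $N$ vertices with $j$ leaves --- and a more detailed verification of the statistic correspondence, which the paper compresses into ``it is easy to verify that this construction is a bijection with the stated property.'' One small point to keep straight (you do handle it): for a singleton component the attached vertex must lie in the top tier, so the convention that a one-vertex tree has one maximum is forced by, and consistent with, the ambient tiering.
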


\begin{proof}
We define the bijection recursively.  We begin by taking the identity
permutation in $S_{n}$ to the unique maxmin tree on $(n+1)$ vertices
with $1$ maximum.  Similarly we take the longest element of $S_{n}$,
the decreasing permutation, to the unique maxmin tree on $(n+1)$
vertices with $n$ maxima.

For the rest of $S_{n}$, we regard $S_{n}$ as embedded in $S_{n+1}$ as
follows.  We take any $\pi \in S_{n}$ and represent it as an ordered
sequence of $1,\dots ,n$.  We then adjoin an $n+1$ on the right
and obtain an element of $S_{n+1}$.  In other words, we identify $S_{n}$
with the parabolic subgroup of $S_{n+1}$ generated by all simple
transpositions other than $(n,n+1)$.  We will in fact construct a
bijection between permutations in this subgroup of $S_{n+1}$ and
maxmin trees on $(n+1)$ vertices.

Thus given $\pi \in S_{n}$ represented as above as an ordered sequence
on $1,\dots ,n+1$ with $n+1$ on the right, we break $\pi$ up into the
two subsequences appearing to the left and right of $1$:  we write
\[
\pi  = \pi_{L} \cdot  1 \cdot  \pi_{R},
\]
where the operator $\cdot $ denotes concatenation.
Note that $\pi_{L}$ may be empty, but $\pi_{R}$ contains at least the
symbol $n+1$.  We further break $\pi_{L}$ into a collection of
subsequences 
\[
\pi_{L} = \pi_{1} \cdot  \pi_{2} \cdots   \pi_{l}
\]
as follows.  Let $m_{1}$ be the maximum symbol appearing in
$\pi_{L}$.  Then $\pi_{1}$ consists of the subsequence of $\pi_{L}$
from the beginning to $m_{1}$.  We then let $m_{2}$ be the maximum of
what's left after deleting $\pi_{1}$, and let $\pi_{2}$ be the
corresponding subsequence up to $m_{2}$.  We continue this process
until $\pi_{L}$ is exhausted.

The result is a collection of ordered sequences $\pi_{1}, \dots ,
\pi_{l}$ and $\pi_{R}$ with the following property: the global maximum
of each appears in the rightmost position.  By induction on the number
of vertices, we know how to build a weight $0$ maxmin tree with the
correct number of maxima and vertices for each of these subsequences
(after relabeling them to preserve their permutation patterns).  We
take the resulting forest of weight $0$ maxmin trees and join them
together into a maxmin tree for $\pi$ by joining $1$ to the
\emph{smallest maximum available} in each connected component.  The
resulting tree clearly has weight $0$.  It is easy to verify that this
construction is a bijection with the stated property.
\end{proof}
\begin{ex}\label{ex:treebuild}
Consider the element $\pi \in S_{10}$ given by the sequence
\texttt{8594673201} (we use characters \texttt{0},\dots ,\texttt{9}
for $\sets{10}$ for this example).  Note that this permutation has
five descents, which occur at \texttt{8}, \texttt{9}, \texttt{7},
\texttt{3}, and \texttt{2}.  We are thus building a maxmin tree with
six maxima.  We identify $\pi$ with the permutation in $S_{11}$
obtained by adjoining the character \texttt{A} (hexadecimal $10$) on
the right: $\pi = \mathtt{8594673201A}$.  We have
\[
\pi_{L} = \mathtt{85946732}, \quad \pi_{R} = \mathtt{1A}.
\]
The left permutation becomes broken into four pieces
\[
\pi_{L} = \mathtt{859}\cdot \mathtt{467} \cdot \mathtt{3} \cdot \mathtt{2},
\]
which means that we must connect vertex $\mathtt{0}$ to five maxmin
trees.  The resulting maxmin tree, which has six maxima, is shown in
Figure \ref{fig:maxminexample}.

\begin{figure}[htb]
\psfrag{0}{$\scriptstyle 0$}
\psfrag{1}{$\scriptstyle 1$}
\psfrag{2}{$\scriptstyle 2$}
\psfrag{3}{$\scriptstyle 3$}
\psfrag{4}{$\scriptstyle 4$}
\psfrag{5}{$\scriptstyle 5$}
\psfrag{6}{$\scriptstyle 6$}
\psfrag{7}{$\scriptstyle 7$}
\psfrag{8}{$\scriptstyle 8$}
\psfrag{9}{$\scriptstyle 9$}
\psfrag{a}{$\scriptstyle A$}
\begin{center}
\includegraphics[scale=0.5]{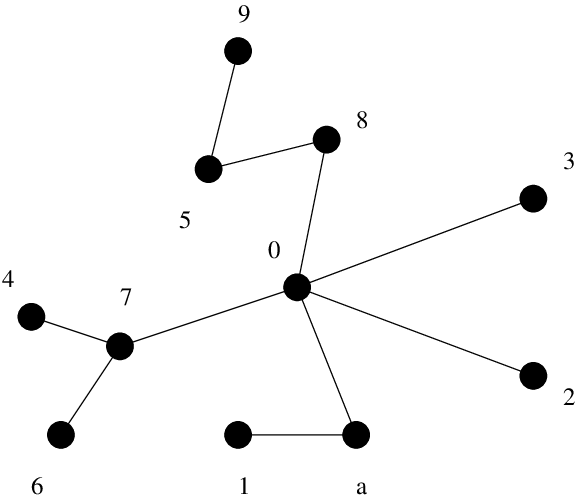}
\end{center}
\caption{The maxmin tree associated to the permutation \texttt{8594673201}.\label{fig:maxminexample}}
\end{figure}
\end{ex}

\subsection{}
We conclude this section by connecting the decomposition into
subsequences in Theorem~\ref{th:wt0} with the Stirling numbers of the
first kind.

\begin{thm}\label{stirl-second}
The number of permutations in $S_n$ that break into $k$ blocks
(subsequences) in the decomposition in Theorem~\ref{th:wt0},
disregarding the singleton block with the minimum element, equals
$n\cdot c(n-1,k)$ where $c(n-1,k)$ is the signless Stirling number of
the first kind.
\end{thm}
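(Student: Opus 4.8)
The plan is to recognize the block‑count as a classical permutation statistic and then sum its generating function over $S_n$ directly. Write $b(\pi)$ for the number of blocks of $\pi\in S_n$ in the decomposition of Theorem~\ref{th:wt0}, disregarding the singleton $\{1\}$, and write $\mathrm{rlm}(w)$ for the number of right‑to‑left maxima of a word $w$ of distinct integers. Recall that the decomposition cuts $\pi=\pi_L\cdot 1\cdot\pi_R$ at the letter $1$ and then splits $\pi_L$ into segments $\pi_1,\dots,\pi_l$ by repeatedly peeling off the initial segment up to the running maximum, so that the blocks are $\pi_1,\dots,\pi_l$ together with $\pi_R$ when $\pi_R\ne\emptyset$. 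The first step is to observe that the successive maxima $m_1>m_2>\cdots$ used in this peeling are exactly the right‑to‑left maxima of $\pi_L$ (each $m_i$ exceeds everything after it and is the leftmost such element of the remaining suffix, and no element strictly between consecutive $m_i$'s can be a right‑to‑left maximum). Hence $l=\mathrm{rlm}(\pi_L)$, and
\[
b(\pi)=\mathrm{rlm}(\pi_L)+[\,\pi_R\ne\emptyset\,].
\]

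The second step is to expand $\sum_{\pi\in S_n}x^{b(\pi)}$ by conditioning on the position $j\in\{1,\dots,n\}$ of the letter $1$. A permutation with $1$ in position $j$ amounts to a choice of the $(j-1)$‑element set of letters lying to the left of $1$ (there are $\binom{n-1}{j-1}$ of these), a linear order on that set (this is $\pi_L$), and a linear order on the remaining $n-j$ letters (this is $\pi_R$). Since $\mathrm{rlm}(\pi_L)$ depends only on the pattern of $\pi_L$, and since $[\pi_R\ne\emptyset]=[j<n]$, one gets
\[
\sum_{\pi\in S_n}x^{b(\pi)}=\sum_{j=1}^{n}\binom{n-1}{j-1}(n-j)!\;x^{[j<n]}\sum_{\rho\in S_{j-1}}x^{\mathrm{rlm}(\rho)}.
\]
Here I would invoke the classical identity $\sum_{\rho\in S_m}x^{\mathrm{rlm}(\rho)}=x(x+1)\cdots(x+m-1)=\sum_k c(m,k)\,x^k$ (equidistribution of right‑to‑left maxima with the number of cycles, e.g.\ via Foata's fundamental bijection, or the recursion $c(m,k)=c(m-1,k-1)+(m-1)c(m-1,k)$).

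Finally I would simplify. For $1\le j\le n-1$ one has $x^{[j<n]}=x$ and $\binom{n-1}{j-1}(n-j)!=(n-1)!/(j-1)!$, so, putting $i=j-1$ and using $\binom{x+i-1}{i}=x(x+1)\cdots(x+i-1)/i!$, these terms sum to $x\,(n-1)!\sum_{i=0}^{n-2}\binom{x+i-1}{i}=x\,(n-1)!\binom{x+n-2}{n-2}=(n-1)\,x(x+1)\cdots(x+n-2)$ by the hockey‑stick identity; the $j=n$ term contributes the remaining $x(x+1)\cdots(x+n-2)$. Adding the two contributions yields
\[
\sum_{\pi\in S_n}x^{b(\pi)}=n\,x(x+1)\cdots(x+n-2)=n\sum_k c(n-1,k)\,x^k,
\]
and comparing coefficients of $x^k$ proves that exactly $n\cdot c(n-1,k)$ permutations of $S_n$ have $b(\pi)=k$. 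I expect the only point requiring real care to be the first step—pinning down which subsequences are the blocks and verifying $b(\pi)=\mathrm{rlm}(\pi_L)+[\pi_R\ne\emptyset]$, in particular that an empty $\pi_R$ contributes no block—since the concluding computation is routine; a more combinatorial alternative would be to build a bijection from $\{\pi\in S_n:b(\pi)=k\}$ onto $[n]\times\{\rho\in S_{n-1}:\mathrm{rlm}(\rho)=k\}$, but the generating‑function route above is the most economical.
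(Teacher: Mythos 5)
Your proposal is correct, but it proves the theorem by a genuinely different route than the paper. The paper's proof is a direct bijection in the spirit of Foata's transition lemma: starting from a permutation of $S_{n-1}$ with $k$ cycles, it inserts the minimal letter after any of the $n-1$ letters or as a fixed point ($n$ choices), writes the cycles in a canonical order (maximum last, cycles in decreasing order of maxima, the cycle of the minimum written last with the minimum first), and erases parentheses; the blocks of the resulting permutation are then literally the cycles one started with, which gives the count $n\cdot c(n-1,k)$ with no computation. You instead observe that the blocks of $\pi_L$ are cut exactly at its right-to-left maxima, so the block statistic is $\mathrm{rlm}(\pi_L)+[\pi_R\neq\emptyset]$, condition on the position of the minimal letter, invoke the classical identity $\sum_{\rho\in S_m}x^{\mathrm{rlm}(\rho)}=x(x+1)\cdots(x+m-1)$, and finish with a hockey-stick summation to get the closed form $\sum_{\pi\in S_n}x^{b(\pi)}=n\,x(x+1)\cdots(x+n-2)$, which is the full distribution and slightly more than the statement asks. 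What each buys: the paper's argument is explicit, self-contained, and exhibits the block--cycle correspondence directly (which is also what makes the example after the theorem transparent), whereas your argument is shorter given the classical equidistribution of records with cycles --- though that identity is itself usually proved by essentially the bijection the paper writes down, so the bijective content is outsourced rather than avoided. The one delicate point, which you correctly flagged and resolved, is the reading of the block statistic: a nonempty $\pi_R$ counts as a single block, an empty $\pi_R$ contributes nothing, the singleton of the minimum is disregarded, and the auxiliary maximal letter appended in Theorem~\ref{th:wt0} does not create a block; this is exactly the convention forced by the paper's construction (and by the check $\sum_k n\,c(n-1,k)=n!$), so your formula $b(\pi)=\mathrm{rlm}(\pi_L)+[\pi_R\neq\emptyset]$ and the ensuing computation are sound.
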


\begin{proof}
The signless Stirling number of the first kind, $c(n-1,k)$, counts
permutations in $S_{n-1}$ with $k$ cycles when they are written in
disjoint cycle notation.  Given such a permutation we construct $n$
different permutations in $S_n$, each of which breaks into $k$ blocks
as in the proof of Theorem~\ref{th:wt0}. Namely, we can insert $0$
after any one of the $n-1$ letters in the $k$ cycles, or else as a
cycle of its own.  Once we have done that we write the cycle
containing 0 with the 0 first and make that the last cycle in our
ordering of the cycles.  The remaining cycles we write with their
maximum letter last, and order them in decreasing order of their
maxima (followed by the cycle containing 0).  Erasing the parentheses
from the word thus constructed we obtain a permutation in $S_n$ whose
blocks are precisely the cycles we started with.  It is easy to see
that this constructs $n$ distinct permutations, and that any
permutation in $S_n$ is constructed this way.
\end{proof}

\begin{ex}
If in the permutation with cycle decomposition $(\mathtt{237})(\mathtt{418})(\mathtt{69})(\mathtt{5})$ we
insert $\mathtt{0}$ after the $\mathtt{2}$ we get the following cycles, ordered as in the
proof of Theorem~\ref{stirl-second}: $(\mathtt{69})(\mathtt{418})(\mathtt{5})(\mathtt{0372})$.  Erasing
the parentheses gives the permutation $\mathtt{6941850723}$, which decomposes
into the blocks $\mathtt{69}\cdot\mathtt{418}\cdot\mathtt{5}\cdot\mathtt{372}$.  Inserting $\mathtt{0}$ as its own
block, on the other hand, gives $(\mathtt{69})(\mathtt{418})(\mathtt{237})(\mathtt{5})(\mathtt{0})$, which yields
$\mathtt{69}\cdot\mathtt{418}\cdot\mathtt{237}\cdot\mathtt{5}$.
\end{ex}

\section{Weight zero fully tiered trees and complete nonambiguous
trees}\label{s:nat}

\subsection{} A tiered tree is called \emph{fully tiered} if all its
vertices sit on distinct levels, i.e.~it has tier type $(1,\dots ,1)$.
In this section we describe a bijection between weight $0$ fully
tiered trees and the complete nonambiguous trees defined by
Aval--Boussicault--Bouvel--Silimbani \cite{nat}.  We begin by defining
the latter objects, following \cite{nat}.

\subsection{} Let $G = \ZZ_{> 0} \times \ZZ_{> 0}$ be the grid of
nonnegative lattice points.  Given any $v\in G$, let $(X (v), Y (v))$
be its coordinates.  For later convenience, in figures we represent
points in $G$ by putting $(1,1)$ in the upper-left corner, having the
$X$ coordinate increase from left to right, and having the $Y$
coordinate increase from top to bottom.  A \emph{nonambiguous tree}
is a subset $A\subset G$ satisfying the following properties:

\begin{enumerate}
\item $(1,1)\in A$.  This is the root of the tree.
\item Let $p\in A$ be different from the root.  Then there exists a
unique point $q\in A$ with either (i) $X (q)<X (p)$ and $Y (q)=Y (p)$
or (ii) $X (q) = X (p)$ and $Y (q)<Y (p)$.  These possibilities are
exclusive: the point $q$ must satisfy one or the other.\label{it:uniqrowcol}
\item there is no empty line between two given points: if there exists
a point $p\in A$ such that $X(p) = x$ (resp. $Y (p) = y$), then for
every $x' < x$ (resp. $y' < y$) there exists $q \in A$ such that $X(q)
= x'$ (resp. $Y (q) = y'$).
\end{enumerate}

Any subset $A$ satisfying these properties determines a unique binary
tree $T (A)$ embedded in $G$ with edges running along the grid lines.
More precisely, if $p\in A$ is a point different from the root, then
the parent of $p$ is the unique point $q\in A$ preceding $p$ in the
same row or column; by condition \eqref{it:uniqrowcol} above, $q$ is
uniquely determined.  This motivates the name ``nonambiguous:'' the
tree structure on $A$ can be recovered from the vertex coordinates
even if the edges are missing.  Figure \ref{fig:4nats} shows four
different nonambiguous trees with $5$ vertices.

\begin{figure}[htb]
\begin{center}
\includegraphics[scale=0.5]{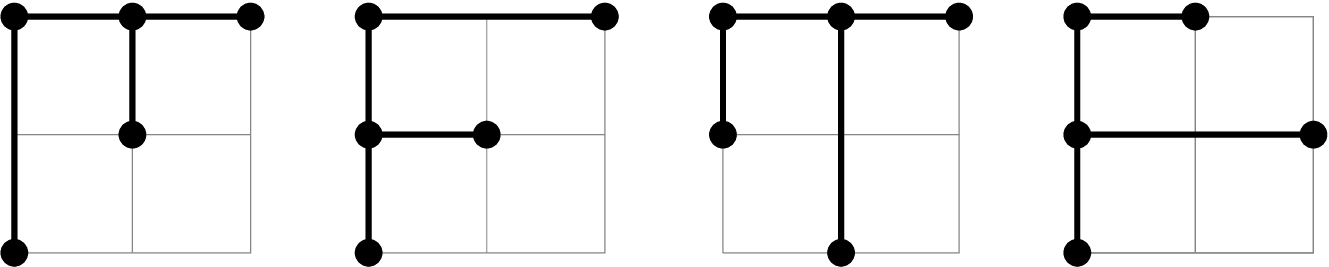}
\end{center}
\caption{Four nonambiguous trees\label{fig:4nats}}
\end{figure}

\subsection{} The trees in Figure \ref{fig:4nats} have the further
property of being \emph{complete}.  A complete nonambiguous tree is
one that is complete in the usual sense, that is, every point has $0$
or $2$ children.  Any such tree has $2k+1$ vertices, $k+1$ of which
are leaves and $k$ of which are internal.

Let $b_{k}$, $k\geq 0$ be the number of complete nonambiguous trees
with $k$ internal vertices.  The sequence $b_{k}$, which begins 
\begin{equation}\label{eq:nats}
1, 1, 4, 33, 456, 9460, 274800, 10643745, 530052880, 32995478376, \dots, 
\end{equation}
appears in OEIS \cite{oeis} as sequence
\texttt{A002190}, and is related to the logarithm of the $J$-Bessel function:
\begin{equation}\label{eq:bessel}
\sum_{k\geq 1} b_{k-1}\frac{x^{k}}{k!^{2}} = -\log J_{0} (2\sqrt{x}).
\end{equation}

According to \cite{nat}, complete nonambiguous trees provided the
first combinatorial interpretation of the sequence \texttt{A002190}, which was
originally defined through \eqref{eq:bessel}.  The following theorem
provides another combinatorial interpretation of this sequence:

\begin{thm}\label{thm:fullytiered}
There is a bijection between weight $0$ fully-tiered trees on $n$
vertices and complete nonambiguous trees with $n-1$ internal
vertices.
\end{thm}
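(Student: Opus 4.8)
The plan is to build the bijection recursively, mirroring the structure of the weight function in Definition~\ref{def:weightoftt} and the structure of a complete nonambiguous tree, which is itself recursive via its left and right subtrees hanging off the root. Let me first recall the key count: there are $b_{n-1}$ complete nonambiguous trees with $n-1$ internal vertices (hence $2n-1$ vertices total), and I would first want to check that the number of weight $0$ fully tiered trees on $n$ vertices is also $b_{n-1}$. The sequence \eqref{eq:nats} reads $1,1,4,33,456,9460,\dots$, and the constant terms of the polynomials $P_{(1,\dots,1)}(q)$ in Table~\ref{tab:qpolys} are $1, 4, 33, 456, 9460,\dots$ for tier types $(1,1),(1,1,1),(1,1,1,1),\dots$ on $n=2,3,4,5,\dots$ vertices, so this matches. (Strictly this numerical match is a consequence of the bijection, not an input, but it is a useful sanity check and a guide to the recursion.)

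The main step is to set up the recursion on both sides and match them. On the fully tiered tree side: given a weight $0$ fully tiered tree $T$ on vertex set $\sets{n}$, delete the vertex $1$ (which necessarily sits on the bottom tier), obtaining components $T_1,\dots,T_l$, each of which is weight $0$ and, after relabeling, a fully tiered tree on its own vertex set. The weight-$0$ condition forces vertex $1$ to be joined in each $T_i$ to the \emph{smallest} vertex $u_i$ that lies above $t(1)$ in tier; but in a fully tiered tree \emph{every} vertex of $T_i$ lies above $t(1)$, so $u_i$ must be the globally smallest vertex of $T_i$. Thus the data of $T$ is exactly: an ordered set partition / forest of fully tiered trees on $\sets{n}\setminus\{1\}$, with the caveat that the tiers of the components interleave and the attaching vertices are forced. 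One subtlety worth isolating is how the tiering function of $T$ restricts and reindexes on the components; I expect the cleanest bookkeeping is to track, for each component, the relative order of its tiers rather than absolute tier values. On the nonambiguous tree side, a complete nonambiguous tree with root $(1,1)$ splits into a left subtree along row $1$ and a right subtree along column $1$, each of which is again a complete nonambiguous tree (suitably translated), and completeness means the root has exactly $0$ or $2$ children.

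I would then define the bijection by induction, specifying how the recursive decomposition of a weight $0$ fully tiered tree corresponds to the left/right subtree decomposition of a complete nonambiguous tree, reading off the interleaving of tiers as the interleaving of rows and columns in the grid $G$ (the condition in item (3) that there be no empty line corresponds to the tiering function being a genuine surjection onto a contiguous block of tiers when restricted appropriately). The parallel with Theorem~\ref{th:wt0}, where weight $0$ maxmin trees are built by joining $1$ to the smallest available maximum in each component, suggests the same ``smallest available'' rule is what makes everything canonical here; the difference is that in the fully tiered case the relevant order is the full vertex order interleaved with the tier order, which is precisely the two-dimensional grid order of a nonambiguous tree.

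The hard part will be two things: first, correctly matching the combinatorial data of ``interleaved tiers of the components'' with ``interleaved rows/columns of subtrees'' so that the map is well-defined and bijective at each recursive step, i.e.\ getting the shuffle/merge of the children's structures exactly right and verifying it is reversible; and second, verifying that the weight-$0$ condition translates \emph{exactly} into the completeness condition and nothing weaker or stronger. I expect the cleanest route is to phrase both objects in terms of the same auxiliary recursive structure (a binary tree whose internal nodes carry a shuffle of the two subtrees' linear orders) and show each side is in bijection with that structure; then composing gives the theorem. An explicit worked example, analogous to Example~\ref{ex:treebuild}, would be included to illustrate the correspondence.
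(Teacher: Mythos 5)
Your overall plan (a recursive bijection that deletes vertex $1$ on the tiered-tree side and decomposes the nonambiguous tree on the other) is in the same spirit as the paper's proof, but as written it contains a genuine error and is missing the key idea that makes the recursion work. The error: in a weight $0$ fully tiered tree the vertex $1$ need \emph{not} sit on the bottom tier, so it is false that every vertex of a component $T_i$ lies on a tier above $t(1)$, and false that weight $0$ forces $1$ to attach to the globally smallest vertex of $T_i$; the condition $w_i=0$ only forces attachment to the smallest vertex of $T_i$ whose tier is above $t(1)$. Concretely, on $3$ vertices take $2$ on the bottom tier, $1$ on the middle tier, $3$ on top, with edges $\{2,3\}$ and $\{1,3\}$: this tree has weight $0$, yet $1$ is joined to $3$, not to the minimum $2$ of the component. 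If your recursion is run literally (always placing $1$ at the bottom and joining it to global minima), it produces only $3$ of the $4=b_2$ weight $0$ fully tiered trees on $3$ vertices, so the map cannot be a bijection.

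The missing idea is the precise dictionary you flag as ``the hard part'' but leave open. The paper places a complete nonambiguous tree $T$ with $n$ internal vertices in the $(n+1)\times(n+1)$ square, uses the fact that each row and each column contains exactly one leaf, and decrees that a leaf in column $j$ and row $i$ (rows counted from the bottom) means vertex $j$ occupies tier $i$ of $T'$. In particular, the row of the unique leaf in column $1$ records the tier of vertex $1$ --- exactly the datum your bottom-tier assumption discards, and exactly what resolves the counterexample above. Moreover, the decomposition on the nonambiguous side is not the root's left/right subtree split you propose: the root has at most two children, whereas vertex $1$ may have arbitrarily many neighbors, so those two decompositions do not line up. Instead one erases the \emph{entire first column} together with its rightward edges, obtaining one complete nonambiguous tree for each component of $T'\setminus\{1\}$, with the disjoint sets of rows and columns of these pieces recording the tiers and labels of the components; this is what makes the interleaving bookkeeping, and the inverse map, go through. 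Without this dictionary and with the bottom-tier assumption in place, the proposal does not yet yield a proof.
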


\begin{proof}
We construct the bijection by induction.  The unique complete
nonambiguous trees with $0$ and $1$ internal vertices correspond
respectively to the unique fully tiered trees with $1$ and $2$
vertices.

A complete nonambiguous tree $T$ with $n$ internal vertices sits
inside the $(n+1)\times (n+1)$ subsquare of $\ZZ_{>0} \times
\ZZ_{>0}$.  Each row and each column contains a single leaf of
$T$. The tree $T$ determines a labeling of the $n+1$ levels of the
associated fully tiered tree $T'$ as follows: if a leaf appears at row
$i$ (numbering from the bottom of the figure to the top) and column
$j$, then at level $i$ in $T'$ we place the vertex $j$ (see Figure
\ref{fig:levels}).

Now consider erasing the first column of $T$, along with all the edges
connecting vertices in this column to nodes in greater columns.  The
result is a forest of complete nonambiguous trees, although each is
embedded with roots at different points of $\ZZ_{>0} \times \ZZ_{>0}$,
and such that their vertex labels are various subsets of $\sets{n}$.
By flattening their vertex labels in the obvious way and shifting the
roots, each of these trees corresponds to a complete nonambiguous
tree.  Thus by induction we know the fully tiered trees to which they
correspond.  This allows us to add the edges they determine to $T'$
(using the original vertex labels, not their flattened versions) to
obtain a fully tiered forest.  To complete $T'$ to a weight $0$ fully
tiered tree, we need to connect the vertex $1$ to each connected
component, and in doing so we choose the minimal vertex we can for
each component.  The resulting fully tiered tree clearly has weight
$0$ (see Figure \ref{fig:building} for a complete example).

We claim this is a bijection.  To go backwards, we reverse the
process.  We take a fully tiered tree $T'$ and delete the vertex $1$.
This gives a forest of fully tiered trees with fewer vertices.  We can
build complete nonambiguous trees from them and can embed them into
$\ZZ_{>0}\times \ZZ_{>0}$ using the disjoint sets of rows and columns
determined by their labels and by the levels on which the labels
appear.  The final step is to add the root and the necessary edges
from the first column heading right to the roots of the smaller trees.
Note that these smaller complete nonambiguous trees can be interlaced
to form a complete nonambiguous tree exactly because their labels and
levels in $T'$ form disjoint subsets.
\end{proof}

\begin{figure}[htb]
\psfrag{T}{$T$}
\psfrag{T'}{$T'$}
\psfrag{1}{$\scriptstyle 1$}
\psfrag{2}{$\scriptstyle 2$}
\psfrag{3}{$\scriptstyle 3$}
\psfrag{4}{$\scriptstyle 4$}
\psfrag{5}{$\scriptstyle 5$}
\psfrag{6}{$\scriptstyle 6$}
\psfrag{7}{$\scriptstyle 7$}
\begin{center}
\includegraphics[scale=0.3]{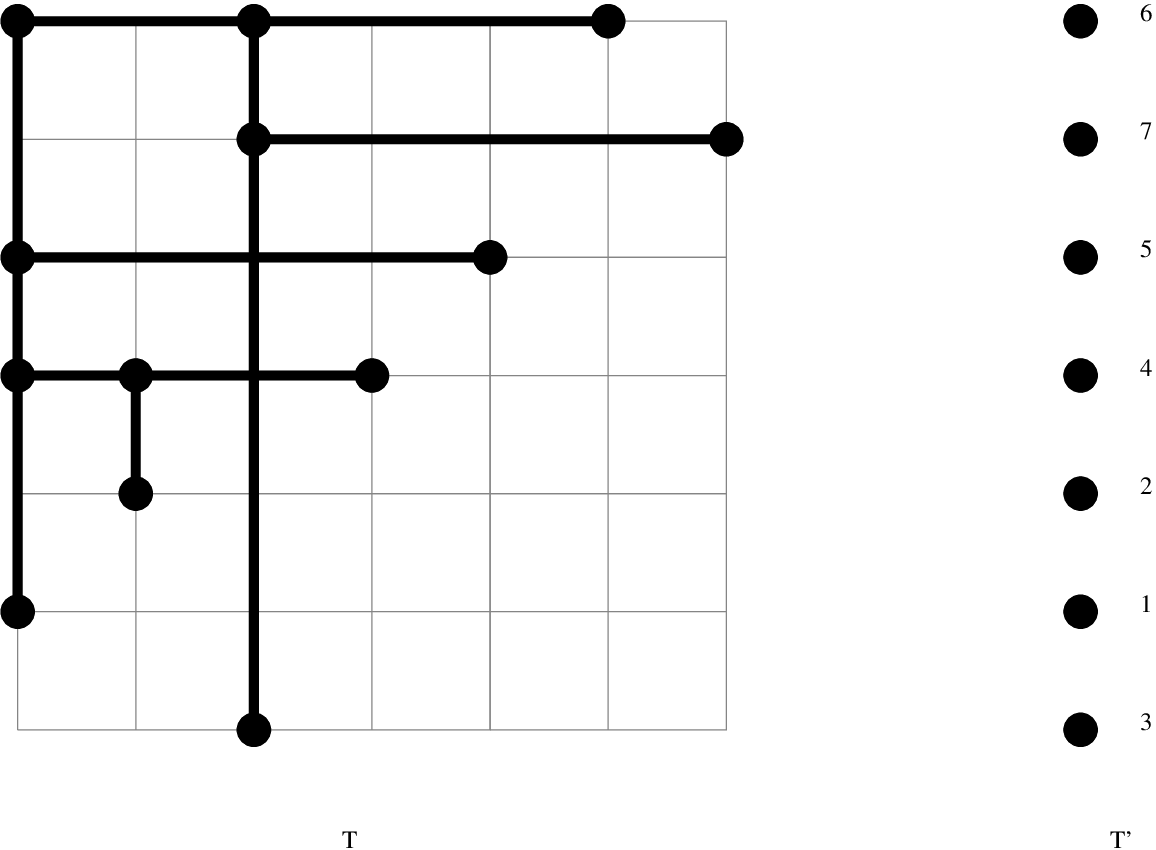}
\end{center}
\caption{Extracting levels for $T'$ from a complete nonambiguous tree
$T$.  For example, the bottommost leaf in $T$ in column $3$ puts the
label $3$ at the bottom tier of $T'$.\label{fig:levels}}
\end{figure}

\begin{figure}[htb]
\psfrag{T}{$T$}
\psfrag{T'}{$T'$}
\psfrag{1}{$\scriptstyle 1$}
\psfrag{2}{$\scriptstyle 2$}
\psfrag{3}{$\scriptstyle 3$}
\psfrag{4}{$\scriptstyle 4$}
\psfrag{5}{$\scriptstyle 5$}
\psfrag{6}{$\scriptstyle 6$}
\psfrag{7}{$\scriptstyle 7$}
\begin{center}
\includegraphics[scale=0.3]{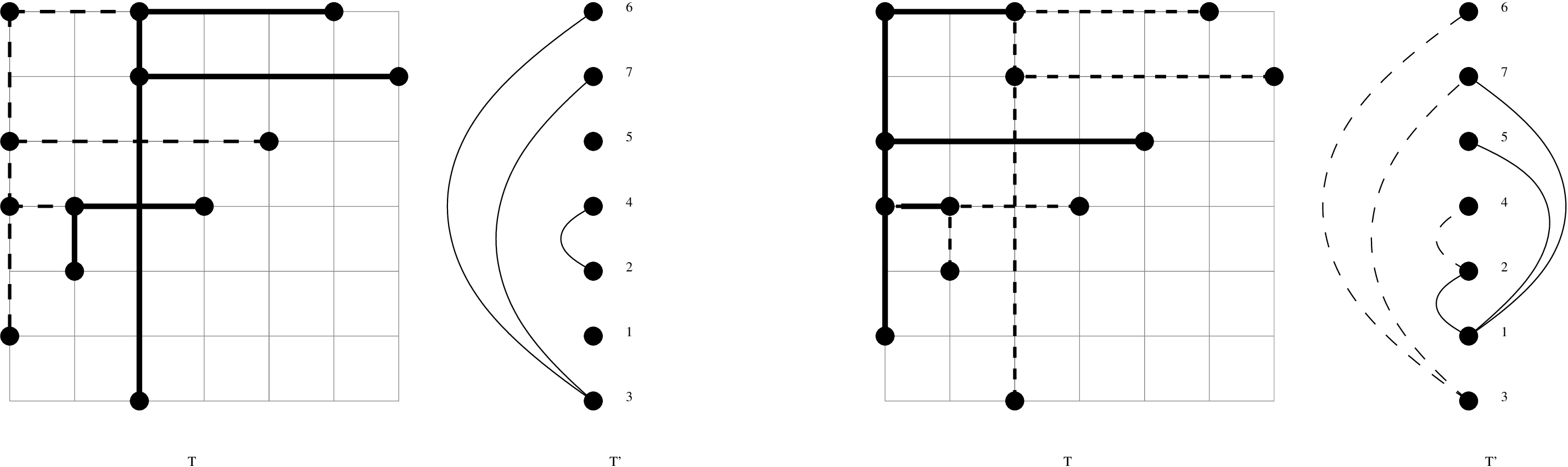}
\end{center}
\caption{Building a fully tiered tree from a complete nonambiguous
tree.  On the left, we add edges for the forest obtained by deleting
the first column.  On the right, we add edges from $1$ to the
connected components of the forest.\label{fig:building}}
\end{figure}

\section{Weights of permutations and $q$-Eulerian numbers}\label{s:wp}
Theorem \ref{th:wt0} shows that order $n$ weight $0$ maxmin trees with
$k$ maxima are in bijection with elements of $S_{n-1}$ with $k-1$
descents.  What can one say about trees of higher weight?  In fact,
the proof of Theorem \ref{th:wt0} shows that any maxmin tree of any
weight is built from an underlying permutation.  Indeed, in the
bijection between trees and permutations, one builds a weight $0$ tree
from a permutation by extracting subpermutations, building weight $0$
trees from them, and then by connecting all the smaller trees to the
minimal vertex canonically: one joins the minimal vertex to the
smaller trees by picking maxima with the smallest labels.  

Thus it is clear how to modify this construction to produce a tree of
higher weight.  At any stage, one can connect the current minimal
vertex to a larger maximum instead of choosing the smallest maximum
available.  For instance, in Example \ref{ex:treebuild}, one could
connect $\mathtt{0}$ to $\mathtt{9}$ instead of $\mathtt{8}$ to make a
tree of weight $1$.  Moreover, this is the tree of \emph{largest}
weight one can make from this permutation.  This leads to the
following definition:

\begin{defn}\label{def:permwt}
Let $\pi \in S_n$.  Then the \emph{weight} $w (\pi)$ of $\pi$ is the
largest possible weight of maxmin tree that can be constructed from
$\pi$.  More precisely, the function $w\colon S_{n}\rightarrow
\ZZ_{\geq 0}$ is defined by the following recursion:
\begin{enumerate}
\item If $\pi$ is the identity or the longest element in $S_{n}$, then we
define $w (\pi)=0$.
\item Otherwise, regard $\pi$ as an element of $S_{n+1}$ with the
symbol $n+1$ appearing on the right, as in the proof of Theorem
\ref{th:wt0}, and let $\pi_{1}$, \dots , $\pi_{l}$, $\pi_{R}$ be the
subpermutations extracted there.  Then we define
\[
w (\pi) = w (\pi_{R}) + d_{R} + \sum_{i=1}^{l} (w (\pi_{i}) + d_{i}),
\]
where $d_{i}$ (respectively, $d_{R}$) is the number of descents in
$\pi_{i}$ (respectively, $\pi_{R}$).
\end{enumerate}
\end{defn}

\begin{ex}\label{ex:permwt}
Let us consider the permutation $\pi = \mathtt{15A86290374} \in
S_{11}$.  We can complete $\pi$ to $\mathtt{15A86290374B}$ by putting
a new maximal element $\mathtt{B}$ on the right.  Extracting
subsequences around the minimal element $\mathtt{0}$, we find 
\[
\mathtt{15A}\cdot \mathtt{8629} \cdot \mathtt{0} \cdot \mathtt{374B}.
\]
Computing $w (\pi)$ proceeds as follows:
\begin{itemize}
\item $\mathtt{15A}$ flattens to $\mathtt{012}$, which is the
identity.  We have $w (\mathtt{15A}) = 0$ and $d (\mathtt{15A}) = 0$.
\item $\mathtt{8629}$ flattens to $\mathtt{2103}$, which is the
completion of the longest word.  We have $w (\mathtt{8629}) = 0$ and
$d (\mathtt{8629}) = 2$.
\item $\mathtt{374B}$ flattens to $\mathtt{0213}$.  A short
computation shows $w (\mathtt{374B}) = 1$ (compare with Figure \ref{fig:4verts}
above; the tree on the bottom right corresponds to this permutation).  We have $d (\mathtt{374B}) = 1$.
\end{itemize}
Thus $w (\mathtt{15A86290374}) = (0+0)+(0+2)+(1+1) = 4$.
\end{ex}

\subsection{}
With our definition of weight, we can define a $q$-analogue of the
Eulerian polynomials.  Put
\[
E_{n} (x,q) = \sum_{\pi \in S_{n}} x^{d (\pi)} q^{w (\pi)}.
\]
Some examples of $E_{n} (x,q)$ are given below.
\begin{align*}
E_{4} (x,q) ={}&  1+x (q^2 + 3q + 7) + x^{2} (q^2 + 4q + 6) + x^{3},\\
\begin{split}
E_{5} (x,q) ={}& 1+x (q^3 + 3q^2 + 7q + 15) + x^{2}(q^4 + 4q^3 + 11q^2 +
25q + 25) \\
&+ x^{3} ( q^3 + 5q^2 + 10q + 10) + x^{4},
\end{split}\\
\begin{split}
E_{6} (x,q) ={}&  1+
  x(q^4 + 3q^3 + 7q^2 + 15q + 31) \\
&+
  x^{2}(q^6 + 4q^5 + 11q^4 + 31q^3 + 58q^2 + 107q +
  90) \\
&+
 x^{3}(q^6 + 5q^5 + 16q^4 + 34q^3 + 76q^2 + 105q +
  65) \\
&+ 
  x^{4}(q^4 + 6q^3 + 15q^2 + 20q + 15) +
  x^{5}.
\end{split}
\end{align*}

\subsection{}
We pause to compare the polynomials $E_{n} (x,q)$ with other
$q$-Eulerian polynomials in the literature.  Let $[x]_{q} = (q^{x}-1)/
(q-1)$ and for $m\geq 1$ let 
\[
\bino{x}{m}_{q} = \frac{(q^{x}-1) (q^{x-1}-1)\dotsb (q^{x-m+1}-1)}{(q-1) (q^{2}-1)\dotsb (q^{m-1}-1)}.
\]
% If $x$ is a positive integer $n$, then $[n]_{q}= 1 + q + \dotsb  +
% q^{n-1}$ is a $q$-integer, and $\bino{n}{m}_{q} = [n]_{q}!/
% ([m]_{q}![n-m]_{q}!)$, where
% $[n]_{q}! = [n]_{q}[n-1]_{q}\dotsb [1]_{q}]$ is the
% $q$-factorial.  
Carlitz \cite{carlitz} defined $q$-polynomials
$A^{*}_{n,s} (q) \in \ZZ [q]$ as follows.  First we define $A_{n,s} (q)$
by the expansion
\[
[x]_{q}^{n} = \sum_{s=1}^{n} A_{n,s} (q) \bino{x+s-1}{n}_{q},
\]
and then set $A^{*}_{n,s} (q) = q^{- (n-s) (n-s-1)/2} A_{n,s} (q)$.
The result is still an integral polynomial in $q$.  Then
Carlitz's $q$-Eulerian polynomial $E^{\text{C}}_{n}(x,q)$ is
defined by 
\[
E^{\text{C}}_{n} (x,q) = \sum_{d=0}^{n-1} A^{*}_{n,d+1} (q)x^{d}.
\]
For example,
\[
E_{3}^{\text{C}} (x,q) = x^{2}+ (2q+2)x+1,
\]
\[
E_{4}^{\text{C}} (x,q) = x^{3} + (3q^{2}+5q+3)x^{2} + (3q^{2}+5q+3)x + 1.
\]
Stanley \cite{stanley-eulerian} defined a different $q$-Eulerian polynomial as follows.  For
any permutation $\pi$, let $i (\pi)$ be the number of
\emph{inversions}:  if $\pi$ is written as an ordered list
$(a_{1},\dotsc , a_{n})$, then $i (\pi)$ counts the number of pairs
$(i,j)$ with $i<j$ and $a_{i}>a_{j}$.  Then Stanley defined 
\[
E^{\text{S}}_{n} (x,q) = \sum_{\pi \in S_{n}} x^{d (\pi)}q^{i (\pi)}. 
\]
For example,
\[
E^{\text{S}}_{3} (x,q) = x^{2}q^{3} + x (2q^{2}+2q) + 1,
\]
\[
E^{\text{S}}_{3} (x,q) = x^{3}q^{6} + x^{2} (3q^{5}+4q^{4}+3q^{3}+q^{2}) + x (q^{4}+3q^{3}+4q^{2}+3q) + 1.
\]
Finally, Shareshian--Wachs \cite{shareshian-wachs} generalized
Stanley's definition by considering four different sums of the form 
\[
E^{\text{SW}}_{n} (x,q) = \sum_{\pi \in S_{n}} x^{a (\pi)}q^{b (\pi)},
\]
where $a$ and $b$ are various statistics of permutations; one example
yields Stanley's polynomial.

\subsection{}
From these examples it is clear that our
$E_{n} (x,q)$ is quite different.  Indeed, there are many interesting
problems to investigate about the combinatorial meaning of these
polynomials.  We indicate some below, and for the present purposes
prove the following results:

\begin{thm}\label{thm:stirling}
There is a bijection between weight 0 permutations on $n$ letters with
$k-1$ descents and partitions of $\sets{n}$ into $k$ subsets.  Thus
such permutations are counted by $\stirling{n}{k}$, the Stirling
numbers of the second kind.
\end{thm}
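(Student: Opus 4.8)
The plan is to construct the bijection explicitly, sending a weight $0$ permutation $\pi \in S_{n}$ to the set partition of $\sets{n}$ whose blocks are the \emph{maximal increasing runs} of $\pi$, regarded as subsets of $\sets{n}$. The first thing I would do is characterize which permutations have weight $0$. Unrolling the recursion in Definition~\ref{def:permwt},
\[
w (\pi) = w (\pi_{R}) + d_{R} + \sum_{i=1}^{l} (w (\pi_{i}) + d_{i}),
\]
every summand is nonnegative, so $w (\pi) = 0$ forces $d_{R} = 0$ and $d_{i} = 0$ for all $i$. But a permutation with no descents is the identity, which has weight $0$ by part~(1) of the definition; hence $w (\pi_{R}) = 0$ and $w (\pi_{i}) = 0$ are automatic, and the two base cases (identity and longest element) turn out to satisfy the criterion below as well, so nothing is lost in the recursion. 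I would conclude that $\pi$ has weight $0$ \emph{if and only if}, in the decomposition of Definition~\ref{def:permwt}---append $n+1$ on the right, split around the symbol $1$ as $\pi_{L}\cdot 1 \cdot \pi_{R}$, and peel $\pi_{L} = \pi_{1}\cdots\pi_{l}$ apart by repeatedly removing the prefix ending at the current maximum---every one of $\pi_{1},\dots ,\pi_{l}$ and $\pi_{R}$ is an increasing sequence.

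Next I would translate this into an intrinsic statement about $\pi$. If each block is increasing, then since the peeling removes prefixes up to successive maxima, the block $\pi_{i}$ ends in its own maximum and $\max \pi_{1} > \max \pi_{2} > \dots > \max \pi_{l}$; so there is a descent between consecutive blocks and also between $\pi_{l}$ and the symbol $1$, whereas $1 \cdot \pi_{R}'$---where $\pi_{R}'$ is $\pi_{R}$ with the appended $n+1$ removed---is increasing and headed by $1$. Thus the maximal increasing runs of $\pi$, read left to right, are exactly $\pi_{1},\dots ,\pi_{l}, 1\cdot \pi_{R}'$; calling them $B_{1},\dots ,B_{k}$, the criterion becomes: \emph{$\pi$ has weight $0$ if and only if $1 \in B_{k}$ and $\max B_{1} > \max B_{2} > \dots > \max B_{k-1}$.} The converse implication---that these conditions reproduce the decomposition, the global maximum of $\pi_{L} = B_{1}\cdots B_{k-1}$ sitting at the end of $B_{1}$, and so on---is straightforward. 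Moreover $d (\pi) = k - 1$, since the number of descents is one less than the number of maximal increasing runs.

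Finally I would define $\Phi (\pi) = \{B_{1},\dots ,B_{k}\}$, a partition of $\sets{n}$ into $k$ blocks with $d (\pi) = k - 1$, and verify it is a bijection by describing the inverse: given a partition into blocks $C_{1},\dots ,C_{k}$, let $C_{k}$ be the block containing $1$, order the rest so that their maxima strictly decrease (possible and unique because disjoint blocks have distinct maxima), sort each block increasingly, and concatenate in that order. A quick check of the junctions---$\max C_{i} > \min C_{i+1}$ because $\min C_{i+1} \leq \max C_{i+1} < \max C_{i}$, and $\max C_{k-1} \geq 2 > 1 = \min C_{k}$---shows that each boundary is a descent, so the maximal increasing runs of the resulting permutation are exactly the $C_{i}$; by the criterion it has weight $0$, and the two maps are evidently mutually inverse. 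Since partitions of $\sets{n}$ into $k$ blocks are counted by $\stirling{n}{k}$, this proves the theorem. The main obstacle is the bookkeeping in the first two steps: unrolling the recursion to extract ``every block is increasing'', and then correctly identifying the peeling blocks with the maximal increasing runs of $\pi$---in particular showing that $1$ heads the \emph{last} run and that the preceding run maxima strictly decrease.
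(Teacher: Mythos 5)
Your proof is correct and is essentially the paper's argument run in the opposite direction: the bijection is the same one (blocks are the maximal increasing runs, the block containing $1$ rightmost, the others ordered by decreasing maxima), with the paper constructing the permutation from the partition and recovering the partition by splitting at descents. Your explicit characterization of weight $0$ permutations via the recursion in Definition~\ref{def:permwt} makes the surjectivity step more transparent than the paper's brief remark, but it is not a different route.
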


\begin{proof}
We describe an algorithm that constructs a weight 0 permutation given
any partition of the set $\sets{n}$ (cf.~Example \ref{ex:stirling}),
and will prove that this provides our bijection.

Let $P$ be a partition of the set $\sets{n}$ into $k$ subsets.  Sort
each subset into increasing order, and denote the resulting ordered
subsets by $\pi_{1}, \pi_{2}, \dots , \pi_{k}$.  We assemble these
subpermutations $\pi_{i}$ into a permutation $\pi \in S_{n}$ as
follows.  Let $\pi_{l}$ be the sublist containing the minimum element
1, and place it in the rightmost position in $\pi$. Next, sort the
remaining $\pi_{i}$ by their respective maxima, and arrange them in
order such that their maxima decrease.  Finally, adjoin this string of
subpermutations to the left of $\pi_{l}$ to complete $\pi$. Because we
sorted each $\pi_{i}$ and ordered them with their maxima decreasing,
the final permutation $\pi $ has $k-1$ descents.

We also claim that $\pi$ has weight 0. To see this, first observe that
each $\pi_{i}$ in $\pi$ has no descents, so each one has weight
0. Moreover, each tree constructed from the $\pi_{i}$ contains only a
single maximum, so there is no choice as to how we combine them
together to construct the tree corresponding to $\pi$: the maximum in
each subtree must be connected to the global minimal element. Hence
the maxmin tree corresponding to $\pi$ has weight 0, and this is the
maximum possible weight it could have had.

Finally, notice that the correspondence described is a bijection
between partitions of $\sets{n}$ and weight 0 permutations: the steps
outlined above demonstrate that each partition determines a unique
permutation.  Conversely, given the permutation we can recover the
original partition of $\sets{n}$ by splitting the permutation at its
descent points. This completes the proof.
\end{proof}

\begin{ex}\label{ex:stirling}
Consider the partition of $\sets{10}$ into $4$ parts given by \texttt{25},
\texttt{6130}, \texttt{798}, \texttt{4}.  To build $\pi$, we sort the parts to obtain
\texttt{25}, \texttt{0136}, \texttt{789}, \texttt{4}, and then place
the \texttt{0136} block on the right of $\pi $.  The remaining blocks
are placed on the left with their maxima decreasing: \texttt{789},
\texttt{25}, \texttt{4}.  The final permutation is 
\[
\pi = \mathtt{7892540136},
\]
which has $3$ descents.  The corresponding maxmin tree is shown in
Figure~\ref{fig:stirling}.

\begin{figure}[htb]
\psfrag{0}{$\scriptstyle 0$}
\psfrag{1}{$\scriptstyle 1$}
\psfrag{2}{$\scriptstyle 2$}
\psfrag{3}{$\scriptstyle 3$}
\psfrag{4}{$\scriptstyle 4$}
\psfrag{5}{$\scriptstyle 5$}
\psfrag{6}{$\scriptstyle 6$}
\psfrag{7}{$\scriptstyle 7$}
\psfrag{8}{$\scriptstyle 8$}
\psfrag{9}{$\scriptstyle 9$}
\psfrag{a}{$\scriptstyle A$}
\begin{center}
\includegraphics[scale=0.4]{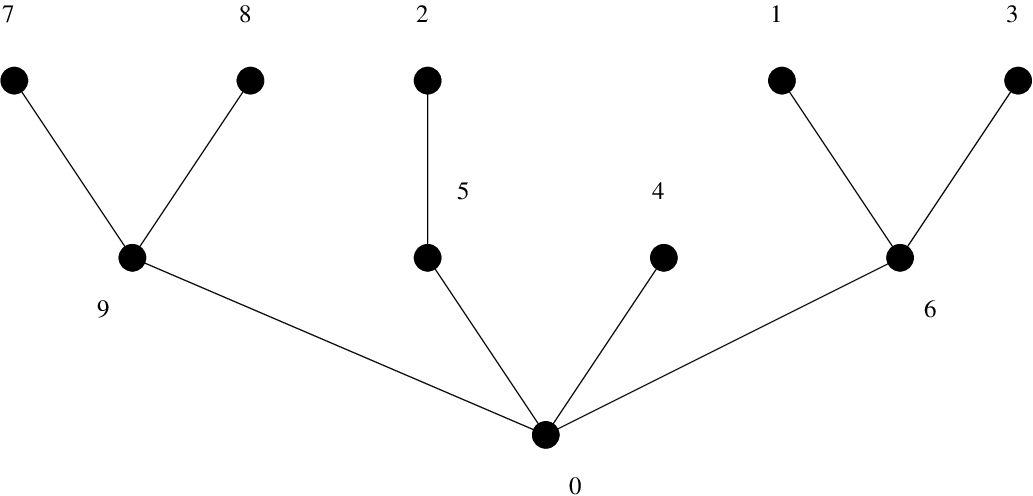}
\end{center}
\caption{The weight $0$ maxmin tree for $\pi = \mathtt{7892540136}$\label{fig:stirling}}
\end{figure}
\end{ex}

\begin{cor}\label{cor:oneandnminusone}
The coefficient of $x$ in $E_{n} (x,q)$ is $q^{n-2} +
3q^{n-3}+7q^{n-4}+ \dotsb  +(2^{n-1}-1)$, and the coefficient of
$x^{n-2}$ is $q^{n-2}+\bino{n}{1}q^{n-3} + \bino{n}{2} q^{n-4} +
\dotsb + \bino{n}{n-2}.$   
\end{cor}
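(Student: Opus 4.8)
The plan is to extract both coefficients directly from the recursion in Definition~\ref{def:permwt}, so I begin by recording the bookkeeping it carries. For $\pi\in S_n$ other than the identity or the longest element, write $\widehat\pi=\pi\cdot(n+1)\in S_{n+1}$ and $\widehat\pi=\pi_1\cdots\pi_l\cdot 1\cdot\pi_R$ for the decomposition around the minimum. Each piece ends in its own maximum, the maxima of $\pi_1,\dots,\pi_l$ strictly decrease, and $1$ (when $l\ge 1$) is immediately preceded by the maximum of $\pi_l$ and followed by a larger symbol; inspecting these boundaries gives
\[
d(\pi)=d(\pi_R)+l+\sum_{i=1}^{l}d(\pi_i),\qquad |\pi_R|+\sum_{i=1}^{l}|\pi_i|=n,
\]
and also $|\pi_R|-d(\pi_R)\ge 1$ and $|\pi_i|-d(\pi_i)\ge 1$, with equality in the first exactly when $\pi_R=\{n+1\}$ and in the $i$th exactly when $\pi_i$ is a singleton. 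I will also use the elementary fact that if $\sigma$ is a strictly decreasing word followed by its maximum then $w(\sigma)=0$: deleting the minimum of such a $\sigma$ leaves a forest of singletons together with an identity-type piece, each of weight $0$ and contributing no $w_i$.

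For the coefficient of $x^{n-2}$ I classify the $\pi\in S_n$ with $d(\pi)=n-2$ (one ascent). Subtracting, the two identities above give $\sum_i\bigl(|\pi_i|-d(\pi_i)\bigr)+\bigl(|\pi_R|-d(\pi_R)\bigr)-l=2$; since every term is at least $1$, exactly one of them exceeds $1$, and this forces one of two shapes: (a) every $\pi_i$ is a singleton and $\pi_R$ is a strictly decreasing word ending in $n+1$; or (b) $\pi_R=\{n+1\}$, one block $\pi_{i_0}$ (of size $\ge 2$) is a strictly decreasing word followed by its maximum, and the remaining blocks are singletons. Using $w(\sigma)=0$ for words of that shape, the weight formula collapses to $w(\pi)=d(\pi_R)=|\pi_R|-2=n-l-2$ in case (a) and to $w(\pi)=d(\pi_{i_0})=|\pi_{i_0}|-2=n-l-2$ in case (b), with $l$ the number of blocks. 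The only remaining freedom is a choice of subset of $\{2,\dots,n\}$ in each case (the singletons, resp.\ the support of $\pi_{i_0}$), so there are $\binom{n-1}{l}$ permutations of type (a) and $\binom{n-1}{l-1}$ of type (b) with $l$ blocks. Summing $q^{n-l-2}$ over both families and using $\binom{n-1}{l}+\binom{n-1}{l-1}=\binom{n}{l}$ yields $\sum_{i=0}^{n-2}\binom{n}{i}q^{n-2-i}$, the claimed coefficient.

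For the coefficient of $x$, call it $c_n(q)$, I would prove a recursion in $n$. A one-descent permutation is a concatenation $A\cdot B$ of two increasing runs with $\max A>\min B$. If $1\in B$, then after decomposing around $1$ the piece $\pi_L=A$ is a single increasing block and $\pi_R$ is increasing, so $w(\pi)=0$; these are exactly the weight-$0$ one-descent permutations, of which there are $\stirling{n}{2}=2^{n-1}-1$ by Theorem~\ref{thm:stirling} (equivalently, one for each nonempty $A\subseteq\{2,\dots,n\}$). If $1\in A$ then $\pi=1\cdot\bar\pi$ with $\bar\pi$ a one-descent permutation of $\{2,\dots,n\}$; writing $\tau\in S_{n-1}$ for its flattening, one turn of the recursion gives $w(\pi)=w(\tau\cdot n)+1$. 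Granting the lemma below that appending a new maximum does not change the weight, this is $w(\tau)+1$, and $\pi\mapsto\tau$ is a bijection onto the one-descent elements of $S_{n-1}$, so these $\pi$ contribute $q\,c_{n-1}(q)$. Hence $c_n(q)=(2^{n-1}-1)+q\,c_{n-1}(q)$, which with $c_2(q)=1$ unrolls to $q^{n-2}+3q^{n-3}+\dots+(2^{n-1}-1)$.

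The step I expect to be the main obstacle is the lemma: $w(\sigma\cdot(m+1))=w(\sigma)$ for $\sigma\in S_m$ (only one-descent $\sigma$ is needed here). I would prove it by induction on $m$, checking $\sigma=\mathrm{id}$ and $\sigma=$ longest element directly. If $\sigma$ does not begin with its minimum, then in $\widehat\sigma$ the minimum lies inside $\sigma_L$, so every piece of the decomposition has size $<m$; the decompositions of $\sigma$ and of $\sigma\cdot(m+1)$ differ only in that $\pi_R$ is replaced by $\pi_R\cdot(m+2)$, whence $w(\sigma\cdot(m+1))-w(\sigma)$ equals the analogous ``append a maximum'' difference for the strictly smaller flattened piece coming from $\pi_R$, which is $0$ by induction. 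If $\sigma$ begins with its minimum, then $\pi_R$ has size $m$ and there is no immediate drop in size; instead one peels off the leading minimum, replacing $\sigma$ by a permutation of the same size whose initial run of consecutive integers $1,2,3,\dots$ is one shorter, and the quantity $w(\rho\cdot(|\rho|+1))-w(\rho)$ is unchanged when $\rho$ is replaced by the peeled permutation. Since $\sigma$ is not the identity this run is proper, so after finitely many peels one reaches a permutation not beginning with its minimum, where the preceding case finishes the proof. Packaging this loop cleanly --- most naturally by viewing $w$ as a function on permutations that already end in their maximum, for which every step of the recursion is automatically size-decreasing --- is where the care is needed.
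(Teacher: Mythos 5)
Your proposal is correct, but it proves the corollary by a genuinely different route than the paper. The paper handles only the coefficient of $x$ (declaring the $x^{n-2}$ case ``similar''), and does so by a squeeze: Theorem \ref{thm:stirling} pins the constant term at $\stirling{n}{2}=2^{n-1}-1$, the ``promotion'' map $(a_1,\dotsc,a_n)\mapsto(1,a_1+1,\dotsc,a_n+1)$ (which preserves descents and raises weight by exactly one) shows each coefficient of the claimed polynomial is a lower bound for the corresponding coefficient of $E_n(x,q)$, and the evaluation at $q=1$ against the Eulerian number $2^n-n-1$ forces all the inequalities to be equalities. You instead compute both coefficients structurally: for $x^{n-2}$ you classify the one-ascent permutations via the block decomposition of Definition \ref{def:permwt}, determine the exact weight $n-l-2$ in each of your cases (a) and (b), and count by choosing a subset of $\{2,\dotsc,n\}$, with Pascal's rule producing $\binom{n}{l}$; for $x$ you split according to the position of $1$ and derive the recursion $c_n(q)=(2^{n-1}-1)+q\,c_{n-1}(q)$, using the fact that appending a new maximum does not change the weight --- which is precisely the paper's Lemma \ref{lem:asc-eq-wt} (left unproved there as ``simple''), and your peeling argument for it is essentially sound, needing only the careful packaging you acknowledge. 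What each approach buys: the paper's is shorter and avoids case analysis, but it leans on Theorem \ref{thm:stirling}, the closed form of the Eulerian number, and (implicitly, in the promotion step) the same append-a-maximum fact; note also that for the $x^{n-2}$ coefficient the promotion injection from $S_{n-1}$ alone would only give the bound $\binom{n-1}{\cdot}$ rather than $\binom{n}{\cdot}$, so the paper's ``can be proved similarly'' conceals an extra step, whereas your direct classification settles that case completely and moreover identifies exactly which permutations contribute each power of $q$. Two small points to tighten: in the $x^{n-2}$ count, say explicitly that each subset choice yields a permutation whose canonical decomposition is the intended one (a one-line check of the successive-maxima rule), and state the append-a-maximum lemma once, cleanly, as an induction on size with an inner induction on the length of the initial run $1,2,\dotsc$, rather than as a loop described in prose.
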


\begin{proof}
We prove the first statement; the second can be proved similarly.  Let
$a (q) = a_{0}+a_{1}q + a_{2}q^{2}+ \dotsb $ be the coefficient of $x$ in $E_{n} (x,q)$, and let $b (q) =
q^{n-2} + 3q^{n-4}+7q^{n-5}+ \dotsb +(2^{n-1}-1)$.

First we observe that if $\pi\in S_{n}$ is a weight $w$ permutation
with $1$ descent, then it determines a weight $w+1$ permutation $\pi
' \in S_{n+1}$ with the same number of descents through a ``promotion''
operation: if $\pi$ is given by the ordered list $(a_{1},\dotsc
,a_{n})$, then $\pi '$ is defined by the list $ (1,a_{1}+1,\dotsc
,a_n+1)$. 

We now use this operation to determine the coefficients as follows.
By Theorem \ref{thm:stirling}, we have $a_{0} = \stirling{n}{2} =
2^{n-1}-1 = b_{0}$.  Applying promotion once, we get $a_{1} \geq
\stirling{n-1}{2} = b_{1}$.  Applying promotion a second time, we get
$a_{2} = \stirling{n-2}{2} = b_{2}$, and so on.  Hence
the difference $c (q) = a (q) - b (q)$ is a nonnegative
polynomial in $q$.  But one knows that the Euler number $A (1,n)$
equals $2^{n}-n-1$.  This implies $c (1) = 0$, which means $a (q)
= b (q)$.  Thus the coefficient of $x$ is $b (q)$ as claimed.
\end{proof}

\subsection{}
Theorem \ref{thm:stirling} addresses the constant terms of the $E_{n}
(x,q)$.  The next result shows that these polynomials are monic and
computes their degrees.

\begin{thm}\label{thm:maxwt}
The maximum weight of a permutation of length $n$ with $d$ descents
is $d(n-1-d)$.  This maximum is attained only by the permutation
$$
12\ldots(n-d-1) n(n-1)\ldots(n-d).
$$
\end{thm}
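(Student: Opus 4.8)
The plan is to prove both assertions at once by induction on $n$, working directly from the recursion in Definition~\ref{def:permwt}. First I would record an auxiliary fact, established by a separate routine induction: the weight is unchanged by the standard embedding $S_{m}\hookrightarrow S_{m+1}$ that appends $m+1$ on the right, i.e.\ $w(\sigma\cdot(m+1))=w(\sigma)$ for $\sigma\in S_m$. (Unfolding the recursion, adjoining a new global maximum at the far right only lengthens the suffix block $\pi_R$ by one, again as a maximum at its right end; this changes no descent count, and the claim for the flattened tail reduces to the inductive hypothesis.) For the theorem itself the base cases are $\pi$ the identity or the longest element: then $d\in\{0,n-1\}$, both $w(\pi)$ and $d(n-1-d)$ vanish, and $\pi=12\cdots(n-d-1)\,n(n-1)\cdots(n-d)$ in each case. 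Otherwise $1\le d\le n-2$, and I split on whether the completed word $\pi_L\cdot 1\cdot\pi_R$ has $\pi_L$ empty.

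If $\pi_L=\emptyset$ (that is, $\pi$ begins with $1$), the recursion reads $w(\pi)=w(\pi')+d$, where $\pi'$ is the flattening of $\pi$ with its leading $1$ deleted and a new maximum adjoined on the right, and $d(\pi')=d$. The decisive point is that $\pi'$ ends in its own maximum, so by the auxiliary fact $w(\pi')=w(\bar\pi')$ with $\bar\pi'\in S_{n-1}$ still having $d$ descents; the inductive hypothesis then gives $w(\pi)\le d(n-2-d)+d=d(n-1-d)$. Equality forces $\bar\pi'=12\cdots(n-d-2)(n-1)(n-2)\cdots(n-d-1)$, and unwinding the passage from $\pi$ to $\pi'$ shows this occurs precisely when $\pi=12\cdots(n-d-1)\,n(n-1)\cdots(n-d)$.

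If $\pi_L\ne\emptyset$, the completed word breaks into $N=l+1\ge 2$ blocks $\pi_1,\dots,\pi_l,\pi_R$ of sizes $a_b$ with $\sum_b a_b=n$ and every $a_b<n$; counting the $l$ descents created at the block junctions gives $\sum_b d_b=d-l$, and crucially every block ends with its own maximum. Hence by the auxiliary fact and the inductive hypothesis $w(\pi_b)\le d_b(a_b-2-d_b)$ for every $b$, so
\[
w(\pi)=\sum_b\bigl(w(\pi_b)+d_b\bigr)\le\sum_b d_b(a_b-1-d_b).
\]
Writing $f_b=a_b-1-d_b\ge 0$ one has $\sum_b d_b=d-l$ and $\sum_b f_b=n-1-d$, so $\sum_b d_b f_b\le(\sum_b d_b)(\sum_b f_b)=(d-l)(n-1-d)\le(d-1)(n-1-d)<d(n-1-d)$, the last step strict because $n-1-d\ge 1$ in this case. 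Thus the bound is never attained here, which is consistent with $\sigma_{n,d}$ beginning with $1$.

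Assembling the cases completes the induction. I expect the main difficulty to lie less in any single estimate than in the bookkeeping: checking that the flattened block $\pi_R$ genuinely ends in its maximum and carries exactly $d_R$ descents, tracking the $l$ junction descents, and verifying that the degenerate values $d=0,\ n-1,\ n-2$ and the size-one blocks all go through uniformly. The one genuinely load-bearing idea is the reduction when $\pi_L=\emptyset$: that step by itself stays inside $S_n$, and it is precisely the auxiliary embedding fact that lets the induction on $n$ descend to $S_{n-1}$.
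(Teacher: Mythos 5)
Your argument is correct, but it takes a genuinely different route from the paper's at the key step. Both proofs run by induction on $n$ and both rest on the same invariance fact (your auxiliary fact is the special case of the paper's Lemma~\ref{lem:asc-eq-wt} in which the removed final letter is the global maximum; the paper likewise leaves that lemma to the reader). The paper then argues by an exchange/merging procedure: if $\pi_L$ has two or more components, merging the last two into a single component (largest letter last, preceded by a decreasing run, preceded by an increasing run) preserves the descent count and strictly increases the achievable weight, comparing $a(k-a)+b(\ell-b)$ with $(a+b+1)(k+\ell-a-b-1)$; a second merge of $\pi_L$ with $\pi_R$ then empties $\pi_L$, and the inductive hypothesis pins down $\pi_R$. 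You instead dispose of the case $\pi_L\neq\emptyset$ in one shot: bounding each block's contribution by $d_b(a_b-1-d_b)=d_bf_b$ and using $\sum_b d_bf_b\le\bigl(\sum_b d_b\bigr)\bigl(\sum_b f_b\bigr)=(d-l)(n-1-d)<d(n-1-d)$, strict since $l\ge1$ and $d\le n-2$; hence no extremal permutation has nonempty $\pi_L$, and uniqueness falls out of the single remaining case $\pi_L=\emptyset$ via the inductive hypothesis applied to $\bar\pi'$. What your route buys is a cleaner uniqueness argument and no need to construct explicit merged permutations or to check that a merge still forces the same break points in the decomposition of Theorem~\ref{th:wt0}; what the paper's merging argument buys is a more structural picture of how weight grows as components coalesce. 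When writing this up, do state explicitly that $\sum_b d_b=d-l\ge0$ (each of the $l$ junctions contributes a descent, which also justifies the step $(d-l)(n-1-d)\le(d-1)(n-1-d)$), and verify the auxiliary fact in the edge case where the shorter permutation is the decreasing word (a base case of Definition~\ref{def:permwt}) while its extension is not, since that case is not covered by simply ``unfolding the recursion.''
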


We will need the following lemma, whose simple proof we leave to the
reader:

\begin{lem}\label{lem:asc-eq-wt}
If a permutation $\pi$ ends in an ascent, and $\pi'$ is obtained by
removing the last letter from $\pi$, then $\pi'$ has the same weight
as $\pi$.
\end{lem}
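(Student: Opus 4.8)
The plan is to induct on $n$, unwinding the recursion for $w$ in Definition \ref{def:permwt}. Write $\pi=(a_1,\dots,a_n)$ with terminal ascent $a_{n-1}<a_n$; since $a_n\ge 2$, the minimal symbol $1$ sits in some position $j\le n-1$, strictly to the left of the last letter. After adjoining the new maximum on the right as in the proof of Theorem \ref{th:wt0}, the symbol $1$ still occupies position $j$, so the left blocks $\pi_1,\dots,\pi_l$ extracted from the part of $\pi$ preceding $1$ are literally the same for $\pi$ and for $\pi'$ (deleting $a_n$, which lies to the right of $1$, never touches them). Consequently the recursion collapses the target identity $w(\pi)=w(\pi')$ to a single statement about the right-hand blocks, namely $w(\pi_R)+d_R=w(\pi'_R)+d'_R$, where $\pi'_R$ and $d'_R$ denote the analogues for $\pi'$. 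The base cases $n\le 2$, together with the case $\pi=\mathrm{id}$, are immediate.

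Next I would dispatch the two halves of this reduced identity. The descent counts are easy: $\pi_R$ is obtained from $\pi'_R$ by inserting $a_n$ between $a_{n-1}$ and the appended maximum, and since $a_{n-1}<a_n<(\text{maximum})$ this merely splits one ascent into two, so $d_R=d'_R$. For the weights I would build the chain
\[
w(\pi_R)=w(\rho)=w(\sigma)=w(\pi'_R),
\]
where $\rho$ and $\sigma$ arise from $\pi_R$ and $\pi'_R$ by deleting their (maximal) final symbol. After standardizing, $\rho$ is precisely $\sigma$ with $a_n$ appended in an ascent, so the middle equality $w(\rho)=w(\sigma)$ is an instance of the lemma itself, but for a permutation of length $n-j\le n-1$; this is exactly where the induction hypothesis is consumed. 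The degenerate case $j=n-1$, in which $\rho$ shrinks to a single symbol, is handled directly: there $\pi_R$ standardizes to $(1,2)$ and $\pi'_R$ to $(1)$, both of weight $0$ with no descents.

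The main obstacle is the outer two equalities of the chain, $w(\pi_R)=w(\rho)$ and $w(\sigma)=w(\pi'_R)$. Each deletes a \emph{global maximum} occupying the last position, and when $1=a_1$ the permutation $\pi_R$ has the same length $n$ as $\pi$, so induction on length cannot be invoked on it directly. To circumvent this I would first isolate and prove, as an auxiliary statement, the special case in which the deleted letter is itself the global maximum: for any $\tau$, appending a new global maximum $M$ on the right leaves the weight unchanged, $w(\tau\cdot M)=w(\tau)$. This auxiliary statement admits a clean self-contained induction on length, because running the recursion for $w(\tau\cdot M)$ reproduces the recursion for $w(\tau)$ verbatim except that its right block is the old right block with one extra maximum appended; that block is strictly shorter than $\tau\cdot M$ (even when $1$ is the first letter, where the left blocks vanish), so the induction closes. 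With this auxiliary result available, the outer equalities $w(\pi_R)=w(\rho)$ and $w(\sigma)=w(\pi'_R)$ become immediate applications of it, the chain closes, and the induction on $n$ for the lemma goes through.
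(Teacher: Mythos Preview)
The paper gives no proof of this lemma; it explicitly says the simple proof is left to the reader. So there is nothing to compare against, and I will assess your argument on its own.

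Your overall architecture is fine: the reduction to the right blocks is correct, and isolating the auxiliary statement ``appending a fresh global maximum does not change the weight'' is the right move. The gap is in your induction for the auxiliary. You assert that the right block arising in the recursion for $w(\tau\cdot M)$ is ``strictly shorter than $\tau\cdot M$ (even when $1$ is the first letter).'' That is false precisely in the case you flag. If $|\tau|=m$ and $\tau$ begins with $1$, then computing $w(\tau\cdot M)$ appends a further maximum $M'$ and locates $1$ in position $1$; the left blocks vanish and the right block is $(\tau_2,\dots,\tau_m,M,M')$, of length $m+1=|\tau\cdot M|$. Equivalently, the instance of the auxiliary you must invoke is for the ``old'' right block $(\tau_2,\dots,\tau_m,M'')$, which has length $m=|\tau|$. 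Either way the length has not dropped, so the induction does not close.

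The repair is small. If $\tau$ begins with the increasing run $1,2,\dots,k$ with $\tau_{k+1}\neq k+1$ (so $k<m$, since $\tau$ is not the identity), then one pass through the recursion produces a right block which, after flattening, has the same length $m$ but initial increasing run of length $k-1$. Iterating, after $k$ passes the minimum is no longer in first position and the length genuinely drops. Formally, prove the auxiliary by a double induction: outer on $m=|\tau|$, and for fixed $m$ an inner (downward) induction on the length $k$ of the initial increasing run. The cases $\tau=\mathrm{id}$ and $\tau$ the longest word serve as base cases (for the latter, $j=m$ and the right block is a one- or two-element increasing sequence, of weight $0$). With this adjustment your proof of the auxiliary goes through, and the rest of your argument is correct as written.
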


\begin{proof}[Proof of Theorem \ref{thm:maxwt}]
It is clear that the permutation mentioned in the theorem has the
weight claimed.  This permutation has an empty $\pi_L$.  It is
straightforward to check that the claim is true for any $d$ and all
$n<4$.  Using that for an inductive argument, we will prove the
theorem for any $n$ and $d$.  We first show that if $\pi_L$ has two or
more components then $\pi$ has smaller weight than a permutation
obtained by merging the last two components of $\pi_L$ in a certain
way.  Then we show that a permutation with a single-component $\pi_L$
has smaller weight than the permutation obtained by merging that
$\pi_L$ and $\pi_R$ to produce a permutation of the form described in
the statement.

Suppose the last two components, $\pi_C$ and $\pi_D$, of $\pi_L$ have lengths
$(k+1)$ and $(\ell+1)$, respectively, and that their respective
numbers of descents are $a$ and $b$.  Because of how we partition, a
component cannot end in a descent (since there would have been a split
between the letters of that last descent). By Lemma~\ref{lem:asc-eq-wt},
the components have the same weight after we remove their respective
last letters.

Thus, by the inductive hypothesis, the sum of the maximum possible
weights of these two components is
\begin{equation}\label{eq2comp}
  \mathrm{wt}_2 = a(k-a) + b(\ell-b).
\end{equation}

We will compare this to the maximum weight of a permutation obtained
by using all the letters of both components to construct a single
component, which must then end in a non-descent, and have $a+b+1$
descents, since there is a descent between the original two
components.  We construct this component to have its largest letter
last (since otherwise it couldn't be a single component), preceded by
the largest $a+b+2$ letters left, in decreasing order, so as to get
$a+b+1$ descents, and with the remaining letters prepended to this in
increasing order.  An example is \texttt{12376548}.

Note that the component thus constructed will still force a break
preceding it, in the partition, since its smallest letter is clearly
smaller than the last letter of the component preceding the components
$\pi_C$ and $\pi_D$.

By Lemma~\ref{lem:asc-eq-wt}, the weight of this newly constructed
component is equal to the weight of the permutation of length
$k+\ell+1$ obtained by removing its last letter.  That weight is

\begin{equation}\label{eq1comp}
  \mathrm{wt}_1 = (a+b+1)(k+\ell-a-b-1)
\end{equation}

A straightforward computation shows that
$\mathrm{wt}_2<\mathrm{wt}_1$, since we have $a\le k-1$ and
$b\le\ell-1$ (a permutation of length $m+1$ ending in an ascent can
have at most $m-1$ descents).  This implies that if $\pi_L$ has two or
more components, these can be merged in the way described above,
preserving the number of descents in~$\pi_L$ while increasing its weight.
Repeating that process we can therefore replace~$\pi_L$ by a single
component preserving the number of descents and increasing its weight.
Thus, if $\pi$ has maximal weight, we can construct a permutation
$\pi'$ with maximal weight and $\des\pi'=\des\pi$, where $\pi'$ has a
single component preceding its minimum element.

An argument similar to the one above for the components of $\pi_L$ can now
be used to show that given $\pi'$ we can construct a permutation
$\pi''$ with the same number of descents and greater weight, whose $\pi_L$
is empty.  More precisely, if the lengths of $\pi_L$ and $\pi_R$ are $\ell$
and $k$, respectively, and their numbers of descents are $a$ and $b$,
then the maximum possible weight of $\pi_L \cdot 1 \cdot \pi_R$ is
\begin{equation}
a(\ell-2-a) + b(k-1-b),
\end{equation}
whereas the maximum possible weight of a single permutation of the
same length is (by the inductive hypothesis)
\begin{equation}
(a+b+1)(\ell+k-a-b-1).  
\end{equation}

By the inductive hypothesis the $\pi_R$ of $\pi''$ must be of the form
$$
2\ldots kn(n-1)\ldots k+1,
$$ 
and together with the prepended 1 this $\pi''$ is of the form
described in the statement of the theorem.
\end{proof}
\subsection{}
We finish by discussing some further questions about the polynomials
$E_{n} (x,q)$.

First, for fixed $d$, the coefficients of $x^{d}$ in $E_{n} (x,q)$
appear to stabilize to a fixed sequence (depending on $d$) as
$n\rightarrow \infty$.  For example, for large $n$, the coefficient of
$x^{2}$ becomes
\[
q^N + 4q^{N-1} + 11q^{N-2} + 31q^{N-3} + 65q^{N-4} + 157q^{N-5} +
298q^{N-6} + \dotsb,
\]
where $N=2 (n-3)$ is the maximal weight from Theorem \ref{thm:maxwt}.
This (conjectural) stabilization means one can define a power series
$W_{d} (t) \in \ZZ \sets{t}$ for each $d\geq 1$: if for $n$ large the
coefficient of $x^{d}$ stabilizes to
\[
q^{N} + a_{1} q^{N-1} + a_{2} q^{N-2} + \dotsb ,
\]
where now $N=d(n-1-d)$, then we put
\[
W_{d} (t) = 1 + a_{1}t +a_{2}t^{2} + \dotsb .
\]
For example,
\begin{align*}
W_{1} (t) ={}& 1 + 3t + 7t^{2}+15t^{3}+31t^{4}+63t^{5}+127t^{6}+\dotsb ,\\
W_{2} (t) ={}& 1 + 4t + 11t^{2} + 31t^{3}+65t^{4}+157t^{5}+298t^{6} +\dotsb ,\\
W_{3} (t) ={}& 1 + 5t + 16t^{2} + 41t^{3} + 112t^{4} + 244t^{5} + 542t^{6} +\dotsb, \\
W_{4} (t) ={}& 1 + 6t + 22t^{2} + 63t^{3} + 155t^{4} + 393t^{5} + 869t^{6} + \dotsb. 
\end{align*}
From this data one can make some observations.  For instance, the
coefficient of $t$ in $W_{d} (t)$ is clearly $d+2$, and the
coefficient of $t^{2}$ is clearly $(d^{2}+5d+8)/2$.  The coefficients
also enjoy many Pascal-like relations, such as $4+7=11$, $5+11=16$,
$41+22=63$.  Finally, there is a connection between the coefficients
of $W_{d}$ and the sequence \texttt{A256193} due to Alois P.~Heinz in OEIS
\cite{oeis}.  By definition, this is the triangle $T (n,k)$ of
partitions of $n$ written with two colors, where each partition
contains exactly $k$ parts of the second color.  For example (using a
prime to denote the second color), we have
\begin{align*}
T(3,0) &= 3,\text{ corresponding to } 111, 21, 3,\\
T(3,1) &= 6,\text{ corresponding to } 1'11, 11'1, 111', 2'1, 21', 3',\\
T(3,2) &= 4,\text{ corresponding to } 1'1'1, 1'11', 11'1', 2'1',\\
T(3,3) &= 1,\text{ corresponding to } 1'1'1'.
\end{align*}
A short table of the $T (n,k)$, with $k$ constant along the rows, is
given in Table~\ref{tab:tnk}.  One sees that the coefficients of
$W_{d}$ agree with the $T (n,k)$ up to a point, and then begin to
diverge.  We don't know an explanation of this connection, or why the
sequences diverge.

Finally, Eulerian polynomials have been defined for all finite Coxeter
groups (cf.~\cite{cohen}), and $q$-Eulerian polynomials have been
defined for them in some cases \cite{brenti}.  It would be interesting
to understand the proper generalization of the concepts in this paper
to finite Coxeter groups.

% 1;
% 1,    1;
% 2,    3,   1;
% 3,    6,   4,   1;
% 5,   12,  11,   5,   1;
% 7,   20,  24,  16,   6,   1;
% 11,  35,  49,  41,  22,   7,   1;
% 15,  54,  89,  91,  63,  29,   8,   1;
% 22,  86, 158, 186, 155,  92,  37,   9,  1;
% 30, 128, 262, 351, 342, 247, 129,  46, 10,  1;
% 42, 192, 428, 635, 700, 590, 376, 175, 56, 11,  1; 
\begin{table}[htb]
\begin{center}
\begin{tabular}{cccccccccc}
\textbf{1}&\textbf{3}&6&12&20&35&54&86&128&$\cdots$\\
 &\textbf{1}&\textbf{4}&\textbf{11}&24&49&89&158&262&$\cdots$\\
 & &\textbf{1}&\textbf{5}&\textbf{16}&\textbf{41}&91&186&351&$\cdots$\\
 & & &\textbf{1}&\textbf{6}&\textbf{22}&\textbf{63}&\textbf{155}&342&$\cdots$
\end{tabular}
\end{center}
\medskip
\caption{The numbers $T (n,k)$\label{tab:tnk}}
\end{table}

\begin{remark}
Using similar ideas that lead to the definition of weight for
permutations, one can define a weight for complete nonambiguous trees.
To do this, one notes that each internal vertex of a complete
nonambiguous tree $T$ corresponds to an edge in the fully tiered tree
$T'$.  If $T'$ has weight $0$, then it means that during its
construction each of its edges is being connected to the minimal
vertex available.  To build a higher weight tree, we connect using
vertices with higher labels to increase the numbers $w_{i}$; on $T$,
this can be recorded by labeling its internal vertices with the
nonnegative integers $w_{i}$.  The maximal such labeling gives the
highest weight fully tiered tree $T'$ that can be built from $T$, and
thus defines the weight of $T$.  This leads to a $q$-analogue of the
numbers in the series \eqref{eq:nats}, and in fact leads to another
weight function on a subset of $S_{n}$, namely those permutations that
do not take $1$ to $n$ and $n$ to $1$ (the connection is given by the
positions of the leaves in the complete nonambiguous tree, or
equivalently the vertex labels of the tiers in the fully tiered tree).
The weight of such a permutation is then defined to be the highest
weight of any complete nonambiguous tree that can be built from it.
We have not explored these weights systematically.
\end{remark}

\bibliographystyle{amsplain_initials_eprint_doi_url}
\bibliography{tiered_trees}
\end{document}